%
%
%
%
%
%
%
%
%
%
%
\documentclass{amsart}
\usepackage{latexsym,amsxtra,amscd,ifthen}
\usepackage{amsfonts}
\usepackage{verbatim}
\usepackage{amsmath}
\usepackage{amsthm}
\usepackage{amssymb}

\numberwithin{equation}{section}

\theoremstyle{plain}
\newtheorem{theorem}{Theorem}[section]
\newtheorem{lemma}[theorem]{Lemma}
\newtheorem{proposition}[theorem]{Proposition}
\newtheorem{corollary}[theorem]{Corollary}
\newtheorem{conjecture}[theorem]{Conjecture}

\theoremstyle{definition}
\newtheorem{definition}[theorem]{Definition}
\newtheorem{example}[theorem]{Example}

\newtheorem{remark}[theorem]{Remark}
\newtheorem{question}[theorem]{Question}

\makeatletter              
\let\c@equation\c@theorem  
\makeatother

\DeclareMathOperator{\hdet}{hdet}

\DeclareMathOperator{\Ext}{Ext} \DeclareMathOperator{\Tor}{Tor}

 \DeclareMathOperator{\Gr}{Gr}

\DeclareMathOperator{\Aut}{Aut}

\DeclareMathOperator{\Kdim}{Kdim}
\DeclareMathOperator{\Cdim}{Cdim}

\DeclareMathOperator{\GKdim}{GKdim}

\begin{document}

\title[Noncommutative Complete intersections]
{Noncommutative Complete intersections}

\author{E. Kirkman, J. Kuzmanovich and J.J. Zhang}

\address{Kirkman: Department of Mathematics,
P. O. Box 7388, Wake Forest University, Winston-Salem, NC 27109}

\email{kirkman@wfu.edu}

\address{Kuzmanovich: Department of Mathematics,
P. O. Box 7388, Wake Forest University, Winston-Salem, NC 27109}

\email{kuz@wfu.edu}

\address{zhang: Department of Mathematics, Box 354350,
University of Washington, Seattle, Washington 98195, USA}

\email{zhang@math.washington.edu}

\begin{abstract}
Several generalizations of a commutative ring that is a graded
complete intersection are proposed for a noncommutative graded
$k$-algebra; these notions are justified by examples from
noncommutative invariant theory.
\end{abstract}

\subjclass[2010]{16E65, 16W50}




\keywords{Artin-Schelter regular algebra, complete intersection,
group action, Gorenstein, Hilbert series, quasi-bireflection}


\maketitle


\setcounter{section}{-1}
\section{Introduction}
\label{xxsec0}

Bass has noted that Gorenstein rings are ubiquitous \cite{Ba}. Since
the class of Gorenstein rings contains a wide variety of rings, it
has proven useful to consider a tractable class of Gorenstein rings, and
complete intersections fill that role for commutative rings.
Similarly Artin-Schelter Gorenstein algebras, which are
noncommutative generalizations of commutative Gorenstein rings,
include a diverse collection of algebras, and finding a class of
Artin-Schelter Gorenstein algebras that generalizes the class of
commutative complete intersections is an open problem in
noncommutative algebra.  In this paper we use our work in
noncommutative invariant theory to propose several notions of a
noncommutative graded complete intersection.  Moreover, the
existence of noncommutative analogues of commutative complete
intersection invariant subalgebras broadens our continuing project
of establishing an invariant theory for finite groups acting on
Artin-Schelter regular algebras that is parallel to classical
invariant theory  (see \cite{KKZ1}--\cite{KKZ5}).

When a finite group acts linearly on a commutative polynomial ring,
the invariant subring is rarely a regular ring (the group must be a
reflection group \cite{ShT}), but Gorenstein rings of invariants are
easily produced.  For example, Watanabe's Theorem (\cite{W1} or
\cite[Theorem 4.6.2]{Be}) states that the invariant subring of
$k[x_1, \cdots, x_n]$ under the natural action of a finite subgroup
of $SL_n(k)$ is always Gorenstein, where $k$ is a base field. In
previous work we have shown there is a rich invariant theory for
finite group (and even Hopf) actions on Artin-Schelter regular
[Definition \ref{xxdef3.1}] (or AS regular, for short) algebras; for
example there is a noncommutative version of Watanabe's Theorem
\cite[Theorem 3.3]{JZ}, providing conditions when the invariant
subring is AS Gorenstein.

Cassidy and Vancliff defined a factor ring $S/I$ of $S =
k_{(q_{ij})}[x_1, \dots, x_n]$, a skew polynomial ring, to be a
complete intersection if $I$ is generated by a regular sequence of
length $n$ in $S$ (hence $S/I$ is a finite dimensional algebra)
\cite[Definition 3.7]{CV}, and in \cite{Va} Vancliff considered
extending this definition to graded skew Clifford algebras. A
few examples of noncommutative (or quantum) complete intersections
have been constructed and studied along the line of factoring out
a regular sequence of elements \cite{BE, BO, Op}. Further,
different kinds of generalizations of a commutative complete
intersection have been proposed during the last
fifteen years \cite{Go, EG, BG, GHS}. 
Recent work on noncommutative (or twisted)  matrix factorizations 
\cite{CCKM}, derived representation schemes \cite{BFR}, 
noncommutative versions of support varieties and finite 
generation of the cohomology ring of a Hopf algebra 
(ideas similar to \cite{BWi, NWi}), as well as 
noncommutative crepant resolutions of 
commutative schemes \cite{Da}, advocate for  a better understanding 
of noncommutative complete intersections. A satisfactory
definition  will have  positive impact on several research areas. 

In the commutative graded case a connected graded algebra $A$ is
called a {\it complete intersection} if one of the following four
equivalent conditions holds [Lemma \ref{xxlem1.7}]
\begin{enumerate}
\item[(cci$^\prime$)]
$A\cong k[x_1, \dots, x_d]/(\Omega_1, \cdots, \Omega_n)$, where
$\{\Omega_1, \dots, \Omega_n\}$ is a regular sequence of homogeneous
elements  in $k[x_1, \dots, x_d]$ with $\deg x_i>0$.
\item[(cci)]
$A\cong C/(\Omega_1, \cdots, \Omega_n)$, where $C$ is a noetherian
AS regular algebra and  $\{\Omega_1, \dots, \Omega_n\}$ is a
regular sequence of normalizing homogeneous elements in $C$.
\item[(gci)]
The $\Ext$-algebra $E(A):=\bigoplus_{n=0}^\infty \Ext^n_A(k,k)$ of
$A$ has finite Gelfand-Kirillov dimension.
\item[(nci)]
The $\Ext$-algebra $E(A)$ is noetherian.
\end{enumerate}
We call such an algebra $A$ a {\it commutative complete intersection}.
In the noncommutative case, unfortunately, these four conditions are
not all equivalent, nor does (gci) or (nci) force $A$ to be
Gorenstein (Example \ref{xxex6.3}), making it unclear which property 
to use as the proper
generalization of a commutative complete intersection. A direct
generalization to the noncommutative case is condition (cci) which
involves considering regular sequences in {\it any} AS regular
algebra (in the commutative case the only AS regular algebras are
the polynomial algebras). Several researchers, including those whom
we have mentioned earlier, have taken an approach that uses regular
sequences. Though the condition (cci) seems to be a good definition
of a noncommutative complete intersection, there are very few tools
available to work with condition (cci) except for explicit
construction and computation, and it is not easy to show condition
(cci) fails since one needs to consider regular sequences in {\it
any} AS regular algebra.

We consider both conditions (gci) and (nci) as possible
definitions of a noncommutative complete intersection. One advantage
of this approach is that it covers a large class of examples
coming from noncommutative algebraic geometry 
and noncommutative ring theory. For example, let $R$
be any noetherian Koszul algebra of finite Gelfand-Kirillov dimension
(or GK-dimension, for short), then the Koszul dual $A:=R^!$ satisfies
both (gci) and (nci), since the $\Ext$-algebra $E(A)$ of $A$ is
 ${A}^{!} = R^{!!}= R$, which is noetherian of finite GK-dimension.
We provide some information about the relationship between
these different notions of noncommutative complete intersection
in the following theorem.

\begin{theorem}
\label{xxthm0.1}{\rm [Theorem \ref{xxthm1.12}(a)]}  Let $A$ be a
connected graded noncommutative algebra.
  If $A$ satisfies (cci),
then it satisfies (gci).
\end{theorem}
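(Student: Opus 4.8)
The plan is to induct on the length $n$ of the regular sequence. Write (cci) as $A\cong C/(\Omega_1,\dots,\Omega_n)$ with $C$ noetherian AS regular and $\Omega_1,\dots,\Omega_n$ a regular sequence of normalizing homogeneous elements of positive degree. For the base case $n=0$ we have $A=C$; an AS regular algebra has finite global dimension $d$ and, being noetherian connected graded, admits a minimal free resolution of $k$ by finitely many finite-rank free modules, so $E(C)=\bigoplus_{i=0}^{d}\Ext^i_C(k,k)$ is finite-dimensional and $\GKdim E(C)=0$.

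For the inductive step put $B=C/(\Omega_1,\dots,\Omega_{n-1})$ and let $\omega\in B$ be the image of $\Omega_n$. Since $\Omega_1,\dots,\Omega_n$ is a regular normalizing sequence, $B$ is again noetherian connected graded, $\omega$ is a regular normal homogeneous element of positive degree, and $A=B/(\omega)$. By induction $\GKdim E(B)<\infty$, and each $\Ext^i_B(k,k)$ is finite-dimensional, so $E(B)$ is a locally finite connected graded algebra when graded by cohomological degree. It therefore suffices to prove the change-of-rings estimate $\GKdim E(A)\le \GKdim E(B)+1$.

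To obtain this estimate I would build a Gulliksen--Shamash resolution of $k$ over $A$ from a minimal $B$-free resolution $(F_\bullet,\partial)$ of $k$. The short exact sequence $0\to B(-d)\xrightarrow{\,\omega\,}B\to A\to 0$ (with $d=\deg\omega$) yields, via normality of $\omega$, a system of higher homotopies on $F_\bullet$; tensoring with a polynomial variable $\chi$ of cohomological degree $2$ and internal degree $d$ and twisting $\partial$ by the normalizing automorphism $\sigma$ of $\omega$ produces an $A$-free resolution of $k$. When $d\ge 2$ this resolution is minimal, so on the level of Ext one obtains a normal operator $\chi\in\Ext^2_A(k,k)$ together with a graded short exact sequence $0\to E(A)(-2)\xrightarrow{\,\chi\,}E(A)\to E(B)\to 0$, equivalently the Hilbert-series identity $H_{E(A)}(t)=(1-t^2)^{-1}H_{E(B)}(t)$, where $t$ records the cohomological degree. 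Because the GK-dimension of such an algebra is the order of the pole at $t=1$ of its rational Hilbert series, multiplication by $(1-t^2)^{-1}$ raises the GK-dimension by exactly one, giving $\GKdim E(A)=\GKdim E(B)+1$. In the remaining case $d=1$ the element $\omega$ is a minimal generator and the analogous comparison gives $H_{E(A)}(t)=(1+t)^{-1}H_{E(B)}(t)$, which leaves the pole order at $t=1$ unchanged; in either case $\GKdim E(A)\le\GKdim E(B)+1$. Feeding this back into the induction yields $\GKdim E(A)\le\GKdim E(C)+n=n<\infty$, so $A$ satisfies (gci).

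The main obstacle is the change-of-rings step for a normal, rather than central, element. One must carry the normalizing automorphism $\sigma$ through the entire Shamash construction so that the twisted differential squares to zero, the operator $\chi$ is a genuine normal element of the Ext-algebra, and the resulting complex is exact and (for $d\ge 2$) minimal. Establishing that this construction produces precisely the graded sequence $0\to E(A)(-2)\to E(A)\to E(B)\to 0$---equivalently, that adjoining the single regular normal element $\omega$ contributes exactly one polynomial variable to the cohomology---is the technical heart; once it is in place, the GK-dimension bookkeeping above is routine.
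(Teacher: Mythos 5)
Your overall skeleton---induction on the length of the normalizing sequence, with base case that $E(C)$ is finite dimensional for a noetherian AS regular $C$, reducing everything to a one-element bound $\GKdim E(B/(\omega))\leq \GKdim E(B)+1$---is exactly the structure of the paper's proof (Theorem \ref{xxthm1.11}(a) via Theorem \ref{xxthm1.10}(a) and induction). The gap is in how you propose to prove that one-step bound. You aim for the exact identity $H_{E(A)}(t)=(1-t^2)^{-1}H_{E(B)}(t)$ via a short exact sequence $0\to E(A)(-2)\xrightarrow{\chi}E(A)\to E(B)\to 0$ produced by a Gulliksen--Shamash resolution twisted by the normalizing automorphism. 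This is much stronger than what is needed, and it is precisely the part you do not establish: the existence of the twisted higher homotopies, the vanishing of the square of the twisted differential, the exactness and the minimality of the resulting complex are all deferred as ``the technical heart.'' Even in the commutative local case the injectivity of $\chi$ (equivalently, freeness of $\Ext^*_A(k,k)$ over $k[\chi]$) is a theorem of Gulliksen, not a formal consequence of the construction; for a normal non-central element it is not proved here, and the paper itself, when it invokes the Cassidy--Shelton machinery in Theorem \ref{xxthm1.10}(c), only obtains a long exact sequence in which $d_2$ is left multiplication by an element of $E^2(B)$, with no claim of injectivity. Two smaller problems: minimality of a Shamash-type resolution is governed by whether $\omega\in (B_{\geq 1})^2$, not by $\deg\omega\geq 2$ (these differ when $B$ is not generated in degree one); and ``GK-dimension equals the order of the pole at $t=1$'' presupposes that $H_{E(B)}(t)$ is rational, which is not part of your induction hypothesis (the bound can be recovered from partial sums, but not as stated).

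The inequality you actually need is available much more cheaply, and this is what the paper does. In your notation, normality and regularity of $\omega$ give a free resolution $0\to B(-d)\xrightarrow{\ \omega\ }B\to A\to 0$, hence $\Tor^B_0(A,k)=k$, $\Tor^B_1(A,k)=k(-d)$ and $\Tor^B_i(A,k)=0$ for $i>1$. The change-of-rings spectral sequence $\Tor^A_p(k,\Tor^B_q(A,k))\Rightarrow\Tor^B_{p+q}(k,k)$ therefore has only two nonzero rows, both copies of $\Tor^A_p(k,k)$, and collapses to a long exact sequence; comparing dimensions yields $|b_{n+2}-b_n|\leq a_{n+2}+a_n$ (with $a_n=\dim\Tor^B_n(k,k)$ and $b_n=\dim\Tor^A_n(k,k)$), which gives $\GKdim E(A)\leq\GKdim E(B)+1$ with no resolution-building and no minimality or freeness claims. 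To complete your argument you should either carry out the twisted Shamash construction in full (and justify injectivity of $\chi$, or settle for the long exact sequence it yields) or substitute this spectral-sequence estimate for the inductive step.
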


Example \ref{xxex6.3} shows that even both (gci) and (nci) do not 
imply (cci), and Example \ref{xxex6.2} shows that (gci) does not imply (nci).

We call an 
algebra $A$ {\it cyclotomic} if its Hilbert series is a quotient of integral 
polynomials, all of whose roots are roots of unity.  If $A$ satisfies 
(cci) (including the case that $A$ is a commutative complete 
intersection) then it is necessary (but not sufficient) for $A$ to be 
cyclotomic.    In the noncommutative case, under reasonable hypotheses, 
properties (gci) and (nci) also satisfy this necessary condition.

\begin{theorem}\label{xxthm0.2} {\rm [Theorem \ref{xxthm1.12}(b, c)]} 
If $A$ satisfies (gci) or (nci), and if the Hilbert series of $A$ is a rational 
function $p(t)/q(t)$ for some coprime integral polynomials $p(t), q(t) 
\in \mathbb{Z}[t]$ with $p(0) = q(0) =1$, then $A$ is cyclotomic.
\end{theorem}

In Section 2 we use the cyclotomic condition to show that certain 
Veronese subrings are
AS Gorenstein algebras that are not complete intersections in terms of any of our generalizations.

Many interesting examples arise from the noncommutative invariant theory
of AS regular algebras under finite group actions, and the following question
was one motivation for our work. Let $\Aut(A)$
denote the group of graded algebra automorphisms of $A$.

\begin{question}
\label{xxque0.3} Let $A$ be a (noncommutative) noetherian connected
graded AS regular algebra and $G$ be a finite subgroup of $\Aut(A)$.
Under what conditions is the invariant subring $A^G$ a complete
intersection?
\end{question}

When $A$ is a commutative polynomial ring over $\mathbb{C}$,
Question \ref{xxque0.3} was answered by Gordeev \cite{G2} and
Nakajima \cite{N3,N4} independently. A very important tool in the
classification of groups $G$, such that $k[x_1, \cdots, x_n]^G$ is a
complete intersection, is a result of Kac and Watanabe \cite{KW} and
Gordeev \cite{G1} independently; they prove that if $G$ is a finite
subgroup of $GL_n(k)$ and $k[x_1, \cdots, x_n]^G$ is a complete
intersection, then $G$ is generated by bireflections (i.e. elements
$g \in G$ such that rank($g-I) = 2$). This leads us to the next
natural question: what is a bireflection in the noncommutative
setting? We seek notions of complete intersection and bireflection
that lead to a generalization of the result of Kac-Watanabe-Gordeev.

In Section 3 we give the basic definitions in our noncommutative invariant theory. 
In the commutative case, we have \cite[p. 506]{St3}
$${\text{regular}} \quad \; \Longrightarrow \quad {\text{hypersurface}}
\quad \; \Longrightarrow \quad {\text{complete intersection}}$$
$$\qquad\quad
\Longrightarrow \quad {\text{Gorenstein}} \qquad \Longrightarrow
\quad {\text{Cohen-Macaulay}}.$$
In contrast to the commutative case, neither (gci) nor (nci) implies
Gorenstein in the noncommutative case [Example \ref{xxex6.3}].
On the other hand, when we work with invariant subrings
$A^G$, we have a satisfactory situation.  An algebra $A$ is 
called {\it cyclotomic Gorenstein} if (i) $A$ is
AS Gorenstein and (ii) $A$ is cyclotomic.

\begin{theorem}\label{xxthm0.4}{\rm [Theorem \ref{xxthm3.4}]}
Assume that $k$ is of characteristic zero.  Let $R$ be a connected 
graded noetherian Auslander regular algebra, and $G$ be a finite 
subgroup of $\Aut(R)$.  If $A = R^G$ satisfies any of (cci), (gci) or 
(nci) then $R^G$ is cyclotomic Gorenstein. 
\end{theorem}
\noindent
It follows from Theorem \ref{xxthm0.4} that when $A^G$ is not 
cyclotomic and AS Gorenstein we know that
it does not satisfy any of our conditions for generalizations of
complete intersections.

Our generalization of a reflection of a symmetric algebra to the
notion of a quasi-reflection of a noncommutative AS regular algebra
\cite[Definition 2.2]{KKZ1}, suggests that a reasonable definition
of a noncommutative bireflection is following: a graded algebra
automorphism $g$ of a noetherian AS regular algebra $A$ of
GK-dimension $n$ is called a {\it quasi-bireflection} if its trace
has the form:
$$Tr_A(g,t) :=  \sum_{i=0}^\infty Tr (g|_{A_i}) t^i = \frac{1}{(1-t)^{n-2} q(t)} $$
where  $q(1) \neq 0$. As in the case of quasi-reflections, there are
``mystic quasi-bireflections" (quasi-bireflections that are not
bireflections of $A_1$) [Example \ref{xxex6.6}]. We prove the
following theorem in Section 4 by reducing to the commutative case.

\begin{theorem}
\label{xxthm0.5} Let $A$ be the quantum affine space
$k_{q}[x_1,\cdots,x_d]$ such that $q\neq \pm 1$. Let $G$ be a finite
subgroup of $\Aut(A)$. If $A^G$ satisfies (gci), then $G$ is
generated by quasi-bireflections.
\end{theorem}

Some partial results about $k_{-1}[x_1,\cdots,x_d]^G$ are given in
\cite{KKZ5}, including the following theorem, that can be regarded as 
a converse of the Kac-Watanabe-Gordeev Theorem.

\begin{theorem} \cite[Theorem 5.4]{KKZ5}
\label{xxthm0.6} Assume that $k$ is of characteristic
zero. Let $A=k_{-1}[x_1,\cdots,x_n]$ and $G$ be
a subgroup of the permutation group $S_n$ (acting naturally on $A$).
If $G$ is generated by quasi-bireflections, then  $A^G$ satisfies
(cci).
\end{theorem}


Further evidence that the definition of quasi-bireflection is a useful 
generalization comes from the study of invariants of noetherian graded 
down-up algebras,  a class of AS regular
algebras of global dimension 3. Let $Q_3$ be the finite subgroup
$GL_2(k)$ generated by
$$\begin{pmatrix} \epsilon &0\\0&\epsilon^{-1}\end{pmatrix}
\quad {\text{and}}\quad
\begin{pmatrix} -1 &0\\0&1\end{pmatrix}$$
where $\epsilon$ is a primitive $n$th root of unity for an odd integer
$n\geq 3$. Let $Q_4$ be the finite subgroup $GL_2(k)$ generated by
$$\begin{pmatrix} -\epsilon &0\\0&\epsilon^{-1}\end{pmatrix}
$$
where $\epsilon$ is a primitive $4n$th root of unity.


\begin{theorem}\cite[Theorem 0.3 and Table 4]{KKZ4}
\label{xxthm0.7} Assume that $k$ is of characteristic
zero. Let $A$ be a connected graded noetherian down-up algebra and
$G$ be a finite subgroup of $\Aut(A)$.
\begin{enumerate}
\item
If $A^G$ satisfies (gci), then $G$ is generated by
quasi-bireflections.
\item
The following conditions are equivalent:
\begin{enumerate}
\item[(i)]
$A^G$ satisfies (gci).
\item[(ii)]
$A^G$ is cyclotomic Gorenstein.
\item[(iii)]
$G$ is a finite subgroup of $GL_2(k)$ such that (iiia) $\det g=1$ or $-1$
for each $g\in G$ and (iiib) $G$ is not conjugate to $Q_3$ and $Q_4$
defined as above.
\end{enumerate}
\end{enumerate}
\end{theorem}

Theorem \ref{xxthm0.7} suggests that the ``cyclotomic Gorenstein''
property is closely related to our notions of complete
intersection for noncommutative algebras of the form $A^G$ where $A$
is a noetherian AS regular algebra and $G\subset \Aut(A)$. For the
generic 3-dimensional Sklyanin algebra $A:=A(a,b,c)$, we also show that
$A^G$ is cyclotomic Gorenstein if and only if $G$ is generated by
quasi-bireflections [Theorem \ref{xxthm5.5}]. 

Section 6 contains examples and questions for further study.





\section{Noncommutative versions of complete intersection}
\label{xxsec1}

Throughout let $k$ be a commutative base field of characteristic zero. Vector spaces, algebras 
and morphisms are over $k$.

In this section we propose several different, but closely related,
definitions of a noncommutative complete intersection graded
$k$-algebra.  We begin by recalling some definitions that will be 
used in our work. The Hilbert series of an ${\mathbb N}$-graded 
$A$-module $M=\bigoplus_{i=0}^\infty M_i$ is defined to be the 
formal power series in $t$
$$H_M(t)=\sum_{i=0}^{\infty} (\dim M_i) t^i.$$

\begin{definition}
\label{xxdef1.1}
Let $A=\bigoplus_{i=0}^\infty A_i$ be a connected graded algebra.
\begin{enumerate}
\item
$A$ has {\it exponential growth} if $\overline{\lim}_i
(\dim A_i)^{1/i}>1$.
\item
$A$ has {\it sub-exponential growth} if $\overline{\lim}_i
(\dim A_i)^{1/i}\leq 1$.
\item
The {\it Gelfand-Kirillov dimension} (or {\it GK-dimension},
for short) of $A$ is
$$\GKdim A=\overline{\lim}_n \log_n(\dim \bigoplus_{i=0}^n A_i).$$
\end{enumerate}
\end{definition}

We refer to \cite{KL} for the original definition of GK-dimension and
its basic properties. Definition \ref{xxdef1.1}(c) agrees with the
original definition given in \cite{KL} when $A$ is finitely generated,
but it may differ otherwise.

Next are the proposed definitions of a noncommutative complete 
intersection, beginning with the definition of AS regularity; 
a commutative AS regular algebra 
is a commutative polynomial ring, and hence it is reasonable to replace a 
commutative polynomial ring by a noetherian AS regular algebra 
in our notions of complete intersection.
By ``an element in a graded algebra" we will mean a homogeneous
element. A set of (homogeneous) elements
$\{\Omega_1,\cdots,\Omega_n\}$ in a graded algebra $A$ is called
{\it a regular sequence of normalizing elements} if, for each $i$, the
image of $\Omega_i$ in $A/(\Omega_1,\cdots,\Omega_{i-1})$ is regular
(i.e. a non-zero-divisor in $A/(\Omega_1,\cdots,\Omega_{i-1})$)
and normal in $A/(\Omega_1,\cdots,\Omega_{i-1})$
(i.e. if $\overline{\Omega_{i}}$ denotes the image of $\Omega_i$
in $\overline{A} = A/(\Omega_1,\cdots,\Omega_{i-1})$, then 
$\overline{\Omega_{i}}\; \overline{A} = \overline{A} \; \overline{\Omega_{i}}$).

\begin{definition}
\label{xxdef1.2} Let $A$ be a connected graded algebra. We call $A$
{\it Artin-Schelter Gorenstein} (or {\it AS Gorenstein} for short)
if the following conditions hold:
\begin{enumerate}
\item[(a)]
$A$ has injective dimension $d<\infty$ on the left and on the right,
\item[(b)]
$\Ext^i_A(_Ak,_AA)=\Ext^i_{A}(k_A,A_A)=0$ for all $i\neq d$, and
\item[(c)]
$\Ext^d_A(_Ak,_AA)\cong \Ext^d_{A}(k_A,A_A)\cong k(l)$ for some $l$
(where $l$ is called the {\it AS index}).
\end{enumerate}
If, in addition,
\begin{enumerate}
\item[(d)]
$A$ has finite global dimension, and
\item[(e)]
$A$ has finite Gelfand-Kirillov dimension,
\end{enumerate}
then $A$ is called {\it Artin-Schelter regular} (or {\it AS regular}
for short) of dimension $d$.
\end{definition}
\begin{definition}
\label{xxdef1.3}
Let $A$ be a connected graded finitely generated algebra.
\begin{enumerate}
\item
We say $A$ is a {\it classical complete intersection} (or {\it cci},
for short) if there is a connected graded noetherian AS regular
algebra $C$ and a regular sequence  of normalizing elements $\{\Omega_1,
\cdots,\Omega_n\}$ of positive degree such that $A\cong C/(\Omega_1,
\cdots,\Omega_n)$.
\item
The {\it cci number} of $A$ is defined to be
$$cci(A)=\min\{n \mid A\cong C/(\Omega_1,\cdots,\Omega_n) \;
{\text{as in part (a)}}\}.$$
\item
We say $A$ is a {\it hypersurface} if $cci(A)\leq 1$.
\end{enumerate}
\end{definition}

Let $E(A)$ denote the $\Ext$-algebra $\Ext^*_{A}(k,k)$ of $A$. It
is ${\mathbb Z}^2$-graded and can be viewed as a connected graded
algebra by using either the cohomological degree or the Adams grading 
(i.e. the grading inherited from $A$).

\begin{definition}
\label{xxdef1.4}
Let $A$ be a connected graded finitely generated algebra.
\begin{enumerate}
\item
We say $A$ is a {\it complete intersection of noetherian type}
(or {\it nci}, for short) if the $\Ext$-algebra $E(A)$ is a left
and right noetherian ring.
\item
When $A$ is a nci, the {\it nci number} of $A$ is defined to be
$$nci(A)=\Kdim E(A)_{E(A)}$$
where $\Kdim$ is the Krull dimension (of a right module).
\item
We say $A$ is an {\it n-hypersurface} if $nci(A)\leq 1$.
\end{enumerate}
\end{definition}

In the next definition, we consider $E(A)=\bigoplus_{i=0}^\infty
\Ext^i_{A}(k,k)$ as a connected graded algebra by using the cohomological
degree, so that the degree $i$ piece is $E_i=\Ext^i_A(k,k)$.

\begin{definition}
\label{xxdef1.5}
Let $A$ be a connected graded finitely generated algebra.
\begin{enumerate}
\item
We say $A$ is a {\it complete intersection of growth type} (or
{\it gci}, for short) if $E(A)$ has finite GK-dimension.
\item
The {\it gci number} of $A$ is defined to be
$$gci(A)=\GKdim (E(A)).$$
\item
We say $A$ is a {\it g-hypersurface} if $gci(A)\leq 1$.
\item We say
$A$ is a {\it weak complete intersection of growth type} (or {\it
wci}) if $E(A)$ has subexponential growth.
\end{enumerate}
\end{definition}

If $E(A)$ has finite GK-dimension then it has subexponential growth.
Hence a gci must be a wci. We recall the $\chi$-condition. Let $k$
also denote the graded $A$-bimodule $A/A_{\geq 1}$.

\begin{definition}\cite[Definition 3.2]{AZ}
\label{xxdef1.6} Let $A$ be a noetherian, connected graded algebra.
We say that the {\it $\chi$-condition} holds for $A$ if
$\Ext^j_A(k,M)$ is finite dimensional over $k$ for all finitely
generated graded left (and right) $A$-modules $M$ and all $j \geq
0$.
\end{definition}

The following lemma is easy.

\begin{lemma}
\label{xxlem1.7} Let $A$ be connected graded. Then the following are
equivalent.
\begin{enumerate}
\item
$A$ has finite global dimension and every term in the minimal
free resolution of the trivial module $k$ is finitely generated.
\item
$\Ext^i_A(k,k)$ is finite dimensional for all $i$ and
$\Ext^i_A(k,k)=0$ for all $i\gg 0$.
\item
$nci(A)=0$.
\item
$gci(A)=0$.
\end{enumerate}
If, further, $A$ is noetherian and satisfies
the $\chi$-condition, then the following are also
equivalent to the above.
\begin{enumerate}
\item[(e)]
$A$ is AS regular.
\item[(f)]
$cci(A)=0$.
\end{enumerate}
\end{lemma}

\begin{proof}
(a) $\Longleftrightarrow$ (e): This follows from \cite[Theorem
0.3]{Z1}. The rest is straightforward.
\end{proof}

In the commutative case, the different versions of complete intersection
are equivalent.

\begin{lemma}
\label{xxlem1.8}
Let $A$ be a connected graded
finitely generated commutative algebra. Then the following are
equivalent.
\begin{enumerate}
\item
$A\cong k[x_1, \dots, x_d]/(\Omega_1, \cdots, \Omega_n)$, where
$\{\Omega_1, \dots, \Omega_n\}$ is a regular sequence of homogeneous
elements in $k[x_1, \dots, x_d]$ with $\deg x_i>0$.
\item
$A$ is a cci.
\item
$A$ is a nci.
\item
$A$ is a gci.
\end{enumerate}
\end{lemma}

\begin{proof} By \cite{BH, FHT, FT, Gu, Ta}, it is well-known that
(a), (c) and (d) are equivalent
for local commutative rings.
The current graded version follows from \cite[Theorem IV.7]{FT} and
\cite[Theorem C]{FHT}.

(a) $\Longrightarrow$ (b): This is obvious.

(b) $\Longrightarrow$ (d): This follows from Theorem \ref{xxthm1.11}
which is to be proved later.
\end{proof}

In the noncommutative case, \ref{xxlem1.8}(a) and
\ref{xxlem1.8}(b,c,d) are clearly not equivalent. Example
\ref{xxex6.2} gives an algebra that is a gci, but is not a nci or a
cci.  Example \ref{xxex6.3} gives an algebra that is both a gci and
a nci, but is not a cci.

Every commutative complete intersection ring
$A:=k[x_1,\cdots,x_d]/(\Omega_1,\cdots,\Omega_n)$ is Gorenstein and
its Hilbert series is of the form
$$H_A(t)=\frac{\prod_{j=1}^n (1-t^{\deg \Omega_j})}
{\prod_{i=1}^d (1-t^{\deg x_i})}.$$
We use a similar condition to define a related notion.

\begin{definition}
\label{xxdef1.9}

Let $A$ be a connected graded finitely generated
algebra.
\begin{enumerate}
\item
We say a rational function $p(t)/q(t)$, where $p(t)$ and $q(t)$ are
coprime integral polynomials,  is {\it cyclotomic} if all the roots
of $p(t)$ and $q(t)$ are roots of unity.
\item
We say $A$ is {\it cyclotomic} if its Hilbert series $H_A(t)$ is of
the form $p(t)/q(t)$ and it is cyclotomic.
\item
The $cyc$ number of $A$ is defined to be
$$cyc(A)=\min\{m \mid H_A(t)=\frac{\prod_{i=1}^m
(1-t^{a_i})}{\prod_{j=1}^n (1-t^{b_j})}\}.$$
\item
We say $A$ is {\it cyclotomic Gorenstein} if the following
conditions hold
\begin{enumerate}
\item[(i)]
$A$ is AS Gorenstein;
\item[(ii)]
$A$ is cyclotomic.
\end{enumerate}
\end{enumerate}
\end{definition}

Let $A$ be the commutative ring $k[x,y,z]/(xy)$ where $\deg x=\deg
y=1$ and $\deg z=2$. Then the Hilbert series of $A$ is $1/(1-t)^2$  
and hence $cyc(A)=0$.  Then 
$E \cong k\langle x,y,z \rangle/(x^2,y^2,z^2, xz+zx, yz+zy)$  so 
that $cci(A)=nci(A)=gci(A)=1$.
It also is easy to construct algebras $A$ such that $cyc(A)=0$ and
$cci(A)=nci(A)=gci(A)$ is any positive integer. The notion of
cyclotomic Gorenstein is weaker than the notion of complete intersection,
even in the commutative case, as seen in Example \ref{xxex6.4}, an
example given by
Stanley. Thus we have the following implications in the commutative case.
\begin{equation}\label{E1.9.1}\tag{E1.9.1}
{\text{cci}}\quad \Longleftrightarrow \quad {\text{gci}}\quad
\Longleftrightarrow \quad {\text{nci}}\quad \Longrightarrow \quad
{\text{cyclotomic Gorenstein}}.
\end{equation}

Next we begin to relate these conditions in the noncommutative
case.

\begin{lemma}
\label{xxlem1.10} Let $A$ be connected graded and finitely generated.
Suppose $E:=E(A)$ is noetherian. Then $nci(A)=1$ if and only if
$gci(A)=1$.
\end{lemma}

\begin{proof} If $gci(A)=\GKdim E=1$ (and as $E$ is noetherian),
$E$ is PI by a result of Small-Stafford-Warfield \cite{SSW}. Hence
$\Kdim E_E=\GKdim E=1$  by \cite[Corollary 10.16]{KL}. Conversely, assume that $\Kdim E_E=nci(A)=1$.
By Lemma \ref{xxlem1.7} it suffices to show that $\GKdim E \leq1$. Since $E$ is noetherian, we
only need to show that $\GKdim E/{\mathfrak p}\leq 1$ for every prime
ideal ${\mathfrak p}$ of $E$ by \cite[Proposition 5.7]{KL}. When $R:=E/({\mathfrak p})$ is prime
there is a homogeneous regular element $f$ in $R$. Since
$\Kdim R\leq \Kdim E=1$, $\Kdim R/Rf=0$. Thus $R/Rf$ is finite
dimensional. So $R$ has GK-dimension 1.
\end{proof}

It is unknown if $nci(A)=2$ is equivalent to $gci(A)=2$ when $E$
is noetherian.

We say a sequence of nonnegative numbers $\{b_n\}_{n=0}^\infty$ has
subexponential growth (or the formal power series $\sum_{n=0}^\infty
b_n t^n$ has subexponential growth) if $\overline{\lim}_i
(b_i)^{1/i}\leq 1$. By \cite[Lemma 1.1]{SZ}, $\{b_n\}_{n=0}^\infty$
has subexponential growth if and only if $\{c_n:=\sum_{i=0}^n
b_i\}_{n=0}^\infty$ has subexponential growth. Here is our first
main theorem.

\begin{theorem}
\label{xxthm1.11} Suppose $A$ is connected graded and finitely
generated. Let $\Omega$ be a regular normal  element in $A$ of
positive degree and let $B=A/(\Omega)$.
\begin{enumerate}
\item
$gci(A)\leq gci(B)\leq gci(A)+1$, or equivalently,
$$\GKdim
\Ext^*_A(k,k)\leq \GKdim \Ext^*_B(k,k)\leq \GKdim \Ext^*_A(k,k)+1.$$
\item
$A$ is a wci if and only if $B$ is.
\item
If $cci(B)=1$, then $B$ is both a gci and a nci, and
$gci(B)=nci(B)=1$.
\end{enumerate}
\end{theorem}

\begin{proof}
(a) Let $a_n=\dim \Tor^A_n(k,k)$ and $b_n=\dim \Tor^B_n(k,k)$. Since
$\Tor^A_n(k,k)^*\cong \Ext^n_A(k,k)$, we have have $a_n=\dim
\Ext_A^n(k,k)$ and $b_n=\dim \Ext_B^n(k,k)$.

Since $B$ is a factor ring of $A$, there is a graded version of the
change-of-rings spectral sequence given in \cite[Theorem 10.71]{Ro}
\begin{equation}
{^2 E_{pq}}:=\Tor^{B}_p(k,\Tor^{A}_q(B,k))\Longrightarrow_p
\Tor^{A}_n(k,k). \label{E1.11.1}\tag{E1.11.1}
\end{equation}
Since $B=A/(\Omega)$ where $\Omega$ is a regular element in $A$,
$\Tor_0^A(B,k)=k$, $\Tor_1^A(B,k)=k$ and $\Tor_i^A(B,k)=0$ for
$i>1$. Hence the $E^2$-page of the spectral sequence \eqref{E1.11.1}
has only two possibly non-zero rows; namely

$q=0:\quad \Tor^{B}_p(k,k)$ for $p=0,1,2,\cdots$, and

\noindent

$q=1:\quad \Tor^{B}_p(k, k)$ for $p=0,1,2,\cdots$.

\noindent Since \eqref{E1.11.1} converges, we have
$\Tor^B_0(k,k)=\Tor^A_0(k,k)=k$ and a long exact sequence
$$\begin{aligned}
\cdots \cdots &\longrightarrow
\Tor^B_4(k,k)\longrightarrow \Tor^B_2(k,k)\longrightarrow \\
\longrightarrow \Tor^A_3(k,k)&\longrightarrow
\Tor^B_3(k,k)\longrightarrow \Tor^B_1(k,k)\longrightarrow \\
\longrightarrow \Tor^A_2(k,k)&\longrightarrow \Tor^B_2(k,k)
\longrightarrow \Tor^B_0(k,k)\longrightarrow \\
\longrightarrow \Tor^A_1(k,k)&\longrightarrow \Tor^B_1(k,k)
\longrightarrow 0. \notag
\end{aligned}
$$
As a part of the above long exact sequence, we have, for each $n$,
the following two exact sequences
\begin{equation}
\label{E1.11.2}\tag{E1.11.2} \Tor^B_{n-1}(k,k)\longrightarrow
\Tor^A_n(k,k) \longrightarrow \Tor^B_{n}(k,k),
\end{equation}
and
\begin{equation}
\label{E1.11.3}\tag{E1.11.3}\Tor^A_{n+2}(k,k)\longrightarrow
\Tor^B_{n+2}(k,k)\longrightarrow \Tor^B_n(k,k) \longrightarrow
\Tor^A_{n+1}(k,k).
\end{equation}
Then \eqref{E1.11.2} implies that
\begin{equation}
\label{E1.11.4}\tag{E1.11.4} a_n\leq b_n+b_{n-1}
\end{equation}
for all $n$, and \eqref{E1.11.3} implies that
\begin{equation}
\label{E1.11.5}\tag{E1.11.5} |b_{n+2}-b_{n}|\leq a_{n+2}+a_{n+1}  
\end{equation}
for all $n$. Using Definition \ref{xxdef1.1}(c), \eqref{E1.11.4}
implies that
$$\GKdim \Ext^*_A(k,k) \leq \GKdim \Ext^*_B(k,k),$$
and \eqref{E1.11.5} implies that $b_{n}\leq \sum_{i=0}^n a_i$
for all $n$, whence
$$\GKdim \Ext^*_B(k,k) \leq \GKdim \Ext^*_A(k,k)+1.$$

(b)   If $b_n$ has subexponential growth, then \eqref{E1.11.4}
implies that $a_n$ has subexponential growth. On the other hand, if
$a_n$ has subexponential growth, by \cite[Lemma 1.1]{SZ},
$c_n:=\sum_{i=0}^n a_i$ has subexponential growth. Now
\eqref{E1.11.5} implies that $b_n$ has subexponential growth. Thus
the assertion in (b) follows.

(c)
We prove this case assuming only that $A$ has finite global dimension
(we do not need to use the fact that $A$ is AS regular).

Let $E^n(A)=\Ext^n_A(k,k)$ and $E^n(B)=\Ext^n_B(k,k)$ for every $n$.
There is a spectral sequence for $\Ext$-groups, similar to
\eqref{E1.11.1}, which gives rise to a long exact sequence, see
\cite[Theorem 6.3 and the discussion afterwards]{CS},
\begin{equation}
\label{E1.11.6}\tag{E1.11.6} E^{n-1}(A) \xrightarrow{\gamma}
E^{n-2}(B)\xrightarrow{d_2} E^n(B)\xrightarrow{\phi^*} E^n(A)  .
\end{equation}
When set $n=p$, this is the exact sequence \cite[(6.1)]{CS}. Then
\cite[Lemma 6.1 and Theorem 6.3]{CS} show that \eqref{E1.11.6} can
be interpreted as an exact triangle of right $E(B)$-modules. There
is another sequence that can be interpreted as an exact triangle of
left $E(B)$-modules, see \cite[(6.2)]{CS} and the discussion after
\cite[Theorem 6.3]{CS}. For our purpose, it is enough to use that
$d_2:E(B) \to E(B)$ is a right $E(B)$-module homomorphism, which is
special case of \cite[Lemma 6.1]{CS}. Therefore $d_2$ is a left
multiplication $l_x$ by some element $x\in E^2(B)$. Since $A$ has
finite global dimension $E^n(A)=0$ for all $n\gg 0$. By the exact
sequence \eqref{E1.11.6} and the fact that $E^n(A)=0$ for all $n\gg
0$, we have that $E(B)/xE(B)$ is finite dimensional. Thus $E(B)$ is
a finitely generated left $k[x]$-module. Therefore $E(B)$ is left
noetherian of GK-dimension $\leq 1$. By symmetry, $E(B)$ is right
noetherian, too. Therefore $E(B)$ is left and right noetherian, and
$B$ is a nci.

By part (a) $gci(B)\leq 1$. By Lemma \ref{xxlem1.8}, $gci(B)\neq 0$.
By Lemma \ref{xxlem1.10}, $gci(A)=nci(A)=1$, and consequently $A$ is
both a n-hypersurface and a g-hypersurface.
\end{proof}

Under the hypotheses of Lemma \ref{xxlem1.8} we have
\begin{equation}\label{E1.11.7}\tag{E1.11.7}
cci(A)=0\quad \Longleftrightarrow \quad gci(A)=0 \quad
\Longleftrightarrow \quad nci(A)=0.
\end{equation}
By Lemma \ref{xxlem1.10} and Theorem \ref{xxthm1.11}(c), if $A$ is a nci,
then
\begin{equation}\label{E1.11.8}\tag{E1.11.8}
cci(A)=1\quad \Longrightarrow \quad nci(A)=1 \quad
\Longleftrightarrow \quad gci(A)=1.
\end{equation}

It is unknown if $gci(A)=1$ implies that $nci(A)=1$ (without
assuming $A$ is a nci a priori) and if $gci(A)=1$ implies that
$cci(A)=1$. It would be nice to have examples to answer these basic
questions.

Here is our second main result; part (a) is Theorem
\ref{xxthm0.1} and parts (b) and (c) are Theorem \ref{xxthm0.2}.

\begin{theorem}
\label{xxthm1.12} Let $A$ be a connected graded finitely generated
algebra.
\begin{enumerate}
\item
If $A$ is a cci, then $A$ is a gci, and $gci(A)\leq cci(A)$.
\item
If $A$ is a gci or a nci, then $A$ is a wci.
\item
Suppose that $A$ is a noetherian wci and that the Hilbert series
$H_A(t)$ is a rational function $p(t)/q(t)$ for some coprime
integral polynomials $p(t),q(t)\in {\mathbb Z}[t]$ with
$p(0)=q(0)=1$. Then $A$ is cyclotomic.
\end{enumerate}
\end{theorem}

\begin{proof}

(a) Write $A=C/(\Omega_1,\cdots,\Omega_n)$ for a noetherian AS
regular algebra $C$ and a regular sequence of normalizing elements
$\{\Omega_1, \cdots, \Omega_n\}$ of positive degree. The assertion
follows from Theorem \ref{xxthm1.11}(a) and the induction on $n$.

(b) Let $E=E(A)$. If $A$ is a gci, then $\GKdim E<\infty$. This
implies that $E$ has subexponential growth. Hence $A$ is a wci.

If $A$ is a nci, then $E$ is noetherian.  By \cite[Theorem
0.1]{SZ}, $E$ has subexponential growth. Therefore $A$ is a wci.

(c) Consider a minimal free resolution of the trivial module
\begin{equation}
\label{E1.12.1}\tag{E1.12.1} \cdots \to P^{i}\to \cdots P^1\to
P^0\to k\to 0
\end{equation}
with $P^i=\bigoplus_{s=1}^{n_i} A(-d^i_s)$ for all $i\geq 0$.
In particular, $P^0=A$. The Hilbert series of $P^i$ is
$$H_{P^i}(t)=\sum_{s=1}^{n_i} t^{d^i_s} H_A(t)=
(\sum_{s=1}^{n_i} t^{d^i_s})H_A(t).$$ Using the additive property of
the $k$-dimension, the exact sequence \eqref{E1.12.1} implies that
$$1=\sum_{i=0}^{\infty} (-1)^i H_{P^i}(t)
=\sum_{i=0}^{\infty} (-1)^i (\sum_{s=1}^{n_i} t^{d^i_s})H_A(t),$$
which implies that
$$H_A(t)=\frac{1}{
\sum_{i=0}^{\infty} (-1)^i (\sum_{s=1}^{n_i}
t^{d^i_s})}=:\frac{1}{Q(t)}.$$ Since $H_A(t)=p(t)/q(t)$,
$Q(t)=q(t)/p(t)$. Since \eqref{E1.12.1} is a minimal resolution of
$k$, we obtain that $\Ext^i_A(k,k)=\bigoplus_{s=1}^{n_i} k(d^i_s)$
for all $i$. Clearly the $\Ext$-algebra $E:=\Ext^*_A(k,k)$ is
${\mathbb Z}\times {\mathbb Z}$-graded and the every nonzero element
in $k(d^i_s)$ has degree $(-d^i_s, i)$, where the first grading is
the Adams grading coming from the grading of $A$ and the second
grading is the cohomological grading. Since \eqref{E1.12.1} is
minimal, $d^i_s\geq i$ for all $s$. Reassigning degree by
$\deg(k(d^i_s)) =(d^i_s, i)$, $E$ becomes connected and ${\mathbb
N}\times {\mathbb N}$-graded. This regrading is equivalent to
letting $\Ext^i_A(k,k)=\bigoplus_{s=1}^{n_i} k(-d^i_s)$. In this
case the ${\mathbb Z}\times {\mathbb Z}$-graded Hilbert series of
$E$ now is
$$H_E(t,u)=\sum_{i=0}^{\infty} (\sum_{s=1}^{n_i} t^{d_s^i})u^i.$$
By definition of $Q(t)$, we have $Q(t)=H_E(t,-1)$. By hypotheses,
$A$ is a wci, so $E$ has subexponential growth. Hence the Hilbert
series $H_E(1, u)$ has subexponential growth. Write
$H_E(1,u)=\sum_{i\geq 0} e_i u^i$, where $e_i=\dim \Ext^i_A(k,k)$,
and let $F_n=\sum_{i=0}^n e_i$. By \cite[Lemma 1.1(1)]{SZ},
$\{F_n\}_n$ has subexponential growth. Write $E(t,1)=\sum_{n\geq
0}f_n t^n$, where $f_n$ is the number of $d^i_s$ that are equal to $n$.
Since each $d_s^i\geq i$ for all $s=1,\cdots,n_i$, $\sum_{i=0}^n
f_n\leq F_n$ for all $n$. Since $\{F_n\}_n$ has subexponential
growth, so does $\{f_n\}_n$. Since the absolute value of each
coefficient of the power series $H_E(t,-1)$ is no more than (the
absolute value of) each coefficient of the power series $H_E(t,1)$,
$H_E(t,-1)$ has subexponential growth. As noted before
$Q(t)=H_E(t,-1)$, and we conclude that the coefficients of $Q(t)$ have
subexponential growth. We have seen that $Q(t)=q(t)/p(t)$ and write
$p(t)= \prod_{i=1}^d (1-r_i t)$. By \cite[Lemma 2.1]{SZ},
$|r_i|\leq 1$. Since $p(t)$ has integral coefficients, $|r_i|=1$ for
all $i$. The proof of \cite[Corollary 2.2]{SZ} shows that each
$r_i$ is a  root of unity.

Since $H_A(t)=p(t)/q(t)$,  by \cite[Corollary 2.2]{SZ}, each root
of $q(t)$ is a root of unity. Therefore $A$ is cyclotomic.
\end{proof}

By Theorem \ref{xxthm1.12} we have the implications below.

\begin{equation}\label{E1.12.2}\tag{E1.12.2}\end{equation}
$$\begin{CD}
cci @. nci\\
@V {\text{Theorem \ref{xxthm1.12}(a)}} VV
@VV{\text{Theorem \ref{xxthm1.12}(b)}} V \\
gci @>{\text{Theorem \ref{xxthm1.12}(b)}}>> wci\\
@. @V{\text{assume $H_A(t)$ is rational}}V
{\text{Theorem \ref{xxthm1.12}(c)}}V \\
@. cyclotomic
\end{CD}$$

\section{Higher Veronese subrings are not cyclotomic}
\label{xxsec2}

In many examples, one proves that an algebra $A$ is not a complete
intersection of any type by showing that $A$ is not cyclotomic, see
the diagram \eqref{E1.12.2}. In this short section we show that most
Veronese subrings of quantum polynomial rings are not cyclotomic.
First we will use a very nice result of Brenti-Welker \cite[Theorem
1.1]{BW}. Recall from \cite{BW} that for $a,b,c\in {\mathbb N}$ the
partition number is defined by
$$C(a,b,c)=\vert \{(n_1,\cdots, n_b)\in {\mathbb N}^b\;|\;
\sum_{i=1}^b n_i=c, 0\leq n_i\leq a \;\forall \; i\}\vert.$$ For
example, $C(1,3,1)=3$.

\begin{lemma}
\label{xxlem2.1} \cite[Theorem 1.1]{BW} Let $(a_n)_{n\geq 0}$ be a
sequence of complex numbers such that for some $s,d\geq 0$ its
generating series $f(t):=\sum_{n\geq 0} a_n t^n$ satisfies
$$f(t)=\frac{h_0+\cdots h_s t^s}{(1-t)^d}.$$
Set $f^{(r)}(t)=\sum_{n\geq 0}a_{rn} t^n$, for any integer $r\geq
2$. Then we have
$$f^{(r)}(t)=
\frac{h_0^{(r)}+\cdots + h_m^{(r)} t^m}{(1-t)^d},$$ where
$m:=\max\{s,d\}$ and
$$h_i^{(r)}
=\sum_{j=0}^s C(r-1,d,ir-j)h_j,$$
for $i=0,\cdots,m$.
\end{lemma}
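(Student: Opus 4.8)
The plan is to reduce the statement to a single algebraic identity that converts the denominator $(1-t)^d$ into a function of $t^r$, after which the Veronese operation $f\mapsto f^{(r)}$ becomes transparent. The starting point is the factorization
$$\frac{1}{(1-t)^d}=\frac{(1+t+\cdots+t^{r-1})^d}{(1-t^r)^d},$$
which holds because $(1-t)(1+t+\cdots+t^{r-1})=1-t^r$. Writing $h(t)=h_0+\cdots+h_s t^s$ for the numerator of $f$, this gives
$$f(t)=\frac{N(t)}{(1-t^r)^d},\qquad N(t):=h(t)\,(1+t+\cdots+t^{r-1})^d.$$
The crucial combinatorial observation is that the coefficients of the new factor are exactly the partition numbers: since $(1+t+\cdots+t^{r-1})^d=\sum_{c}C(r-1,d,c)\,t^c$ by the very definition of $C(r-1,d,c)$ as the number of ways of writing $c$ as an ordered sum of $d$ integers each in $\{0,\dots,r-1\}$, the coefficient of $t^\ell$ in $N(t)$ is $N_\ell=\sum_{j=0}^s C(r-1,d,\ell-j)\,h_j$.

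Next I would record the general principle that makes the Veronese operation commute past a denominator in $t^r$: for any power series $S$ and any polynomial $P=\sum_k P_k t^k$, the series $g(t):=P(t)\,S(t^r)$ satisfies $g^{(r)}(t)=P^{(r)}(t)\,S(t)$, where $P^{(r)}(t):=\sum_p P_{rp}t^p$. Indeed $[t^{rn}]g=\sum_k P_k\,[t^{rn-k}]S(t^r)$, and $[t^{rn-k}]S(t^r)$ vanishes unless $r\mid k$, so the sum collapses to $\sum_p P_{rp}\,S_{n-p}=[t^n]\big(P^{(r)}S\big)$. Applying this with $P=N$ and $S(t)=(1-t)^{-d}$, so that $S(t^r)=(1-t^r)^{-d}$, yields at once
$$f^{(r)}(t)=\frac{N^{(r)}(t)}{(1-t)^d},\qquad N^{(r)}(t)=\sum_{p}N_{rp}\,t^p.$$
Combining this with the formula for $N_\ell$ gives $N_{rp}=\sum_{j=0}^s C(r-1,d,rp-j)\,h_j$, which is precisely $h_p^{(r)}$, the claimed numerator (with $i=p$).

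It remains only to verify the degree bound, i.e. that $h_p^{(r)}=N_{rp}=0$ for $p>m:=\max\{s,d\}$. Since $\deg h\le s$ and $\deg(1+t+\cdots+t^{r-1})^d=(r-1)d$, we have $N_\ell=0$ for $\ell>s+(r-1)d$, hence $N_{rp}=0$ once $rp>s+(r-1)d$, i.e. once $p>d+\frac{s-d}{r}$. When $s\le d$ this bound is $\le d$, and when $s\ge d$ it is $\le s$ because $\frac{s-d}{r}\le s-d$; in either case $N_{rp}=0$ for $p>m$, so the numerator has degree at most $m$, as asserted. The only genuinely substantive step is the opening factorization together with the identification of its coefficients as the partition numbers $C(r-1,d,c)$; once that is in place the remainder is a mechanical coefficient extraction, so I do not expect a real obstacle.
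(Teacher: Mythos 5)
Your proof is correct. Note, however, that the paper itself gives no proof of this lemma: it is quoted verbatim from Brenti--Welker \cite[Theorem 1.1]{BW}, so there is no internal argument to compare against. Your derivation is a clean, self-contained verification along the standard generating-function lines: the factorization $\frac{1}{(1-t)^d}=\frac{(1+t+\cdots+t^{r-1})^d}{(1-t^r)^d}$, the identification of the coefficients of $(1+t+\cdots+t^{r-1})^d$ with the partition numbers $C(r-1,d,c)$, and the observation that the Veronese operator $f\mapsto f^{(r)}$ acts on a product $P(t)S(t^r)$ by $P\mapsto P^{(r)}$ while turning $S(t^r)$ into $S(t)$. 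All three steps check out, including the slightly fiddly verification that the numerator degree does not exceed $m=\max\{s,d\}$ (your case split on $s\le d$ versus $s\ge d$, using $r\ge 2$, is what makes the bound $d+\frac{s-d}{r}\le m$ work). The one convention worth making explicit is that $C(r-1,d,c)=0$ for $c<0$, which is forced by the paper's definition of $C$ as a cardinality and is needed for the sum $\sum_{j=0}^s C(r-1,d,ir-j)h_j$ to be well-formed when $ir<j$.
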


We will apply this lemma to the case when $f(t)=H_A(t)$ for a
connected graded algebra $A$ and $f^{(r)}(t) =H_{A^{(r)}}(t)$ where
$A^{(r)}$ is the $r$th Veronese subring of $A$ (i.e. the subring of 
elements of $A$ with degree a multiple of $r$). The following lemma
is easy to see.

\begin{lemma}
\label{xxlem2.2} Suppose $d\geq 2$.
\begin{enumerate}
\item
$C(a,d,0)=1$ for all $a\geq 0$.
\item
$C(r-1,2,r)=r-1$ for $r \geq 1$.
\item
If $d\geq 3$, then $C(r-1,d,r)\geq d$ for $r \geq2$, and 
the inequality is strict unless $r=2, d=3$. 
\end{enumerate}
\end{lemma}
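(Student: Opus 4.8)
The plan is to treat all three parts by direct enumeration of the lattice points counted by $C(a,b,c)$, recalling that this counts the tuples $(n_1,\dots,n_b)\in\mathbb{N}^b$ with $\sum_i n_i=c$ and each $n_i\le a$. Parts (a) and (b) are immediate counts, while part (c) is the only one requiring an actual estimate, obtained by complementary counting.

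For part (a), the condition $\sum_{i=1}^d n_i=0$ together with $n_i\ge 0$ forces $n_1=\cdots=n_d=0$, and since $a\ge 0$ this single tuple satisfies $n_i\le a$; hence $C(a,d,0)=1$. For part (b), I would first count the tuples $(n_1,n_2)\in\mathbb{N}^2$ with $n_1+n_2=r$ ignoring the upper bound: there are $r+1$ of them, indexed by $n_1\in\{0,1,\dots,r\}$. The bound $n_i\le r-1$ excludes exactly the two extreme tuples $(r,0)$ and $(0,r)$ (distinct since $r\ge 2$), leaving $r+1-2=r-1$, which is the claim.

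For part (c) I would count against the unrestricted total. By stars and bars, the number of $(n_1,\dots,n_d)\in\mathbb{N}^d$ with $\sum_i n_i=r$ and no upper bound is $\binom{r+d-1}{d-1}$. A tuple fails the constraint $n_i\le r-1$ precisely when some coordinate satisfies $n_i\ge r$; since the coordinates are nonnegative and sum to $r$, this forces that coordinate to equal $r$ and all others to vanish, so there are exactly $d$ excluded tuples, one per coordinate. Therefore
\[
C(r-1,d,r)=\binom{r+d-1}{d-1}-d,
\]
and the desired inequality $C(r-1,d,r)>d$ is equivalent to $\binom{r+d-1}{d-1}>2d$. Since $\binom{r+d-1}{d-1}$ is strictly increasing in both $r$ and $d$ for $d\ge 2$, it suffices to verify the inequality on the minimal cases and then propagate by monotonicity.

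The main obstacle is precisely this binomial estimate near the boundary, where it is genuinely tight. For $r\ge 2$ and $d\ge 3$ one has $\binom{r+d-1}{d-1}\ge\binom{d+1}{2}=\tfrac{d(d+1)}{2}\ge 2d$, but equality $\binom{r+d-1}{d-1}=2d$ occurs exactly at $(r,d)=(2,3)$, where $C(1,3,2)=3=d$; so the crude bound $\tfrac{d(d+1)}{2}\ge 2d$ does not by itself deliver strictness. The cleanest remedy is to check the two base cases $(r,d)=(3,3)$ and $(r,d)=(2,4)$ directly, giving $\binom{5}{2}=10>6$ and $\binom{5}{3}=10>8$; then strict monotonicity in $r$ and in $d$ propagates the strict inequality to every $(r,d)$ with $r\ge 2,\ d\ge 3$ dominating one of these base cases, i.e. to all such $(r,d)$ other than $(2,3)$. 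Thus the argument should be organized around the exact formula together with monotonicity, with the one delicate point being the tightness at $(2,3)$, which must be addressed wherever part (c) is invoked.
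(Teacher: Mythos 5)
Your counting is correct throughout, and there is nothing in the paper to compare it with: Lemma \ref{xxlem2.2} is introduced with the remark that it ``is easy to see'' and no proof is supplied. Parts (a) and (b) are exactly the direct counts one would write down. For part (c) your inclusion--exclusion formula $C(r-1,d,r)=\binom{r+d-1}{d-1}-d$ is right (for $r\ge 1$ the only tuples violating the cap $n_i\le r-1$ are the $d$ tuples $re_i$), and the delicate point you isolate is not a defect of your estimate but of the lemma itself: at $(r,d)=(2,3)$ one has $C(1,3,2)=3=d$, the admissible tuples being the three permutations of $(1,1,0)$, so the strict inequality asserted in part (c) is simply false there. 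Your monotonicity argument from the base cases $(3,3)$ and $(2,4)$ correctly establishes the claim for every other pair with $r\ge 2$, $d\ge 3$, and your verification that $\binom{r+d-1}{d-1}-2d$ propagates strictness in both directions is sound.

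Since you ask that the exception be ``addressed wherever part (c) is invoked,'' here is where it bites: Proposition \ref{xxpro2.3}(c) and Corollary \ref{xxcor2.4}(c) use Lemma \ref{xxlem2.2}(c) exactly in the range $r\ge 2$, $d\ge 3$, so the case $r=2$, $d=3$ falls outside the argument as written (the criterion of Proposition \ref{xxpro2.3}(b) requires $h_1^{(r)}>\max\{s,d\}$, and here $h_1^{(2)}=3=d$). The conclusion nevertheless survives: for $H_A(t)=(1-t)^{-3}$ a direct computation gives $H_{A^{(2)}}(t)=(1+3t)/(1-t)^3$, and $-1/3$ is not a root of unity, so $A^{(2)}$ is still not cyclotomic --- but this one case must be checked by hand rather than deduced from the lemma.
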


\begin{proposition}
\label{xxpro2.3} Let $A$ be a connected graded algebra with Hilbert
series
$$H_A(t)=\frac{1+h_1 t\cdots+ h_s t^s}{(1-t)^d}$$
where $h_i\geq 0$ for all $i$. Then the $r$th Veronese subring
$A^{(r)}$ is not cyclotomic if one of the following conditions
holds.
\begin{enumerate}
\item
$r\geq 3$ and $H_A(t)=(1-t)^{-2}$.
\item
$r$ satisfies the inequality $C(r-1, d,r)>\max\{s,d\}$.
\item
$r\geq 2$ and $H_A(t)=(1-t)^{-d}$ and $d\geq 3$.
\end{enumerate}
\end{proposition}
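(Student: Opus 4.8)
The plan is to reduce the cyclotomic condition to a statement about a single polynomial. Writing $H_{A^{(r)}}(t)=P(t)/(1-t)^d$, the denominator contributes only the root $t=1$, which is a root of unity; hence $A^{(r)}$ is cyclotomic if and only if every root of the numerator $P(t)$ is a root of unity (this also finesses the question of whether the fraction is in lowest terms, since the only possible common factor is a power of $1-t$). The engine of the argument will be the elementary observation that a polynomial $Q(t)=\sum_{i=0}^D a_i t^i$ with $a_0=1$ and \emph{all} roots roots of unity must satisfy $|a_1|\le D$: writing $Q(t)=\prod_{i=1}^D(1-r_i t)$ with each $|r_i|=1$ gives $a_1=-\sum_i r_i$, so $|a_1|=|\sum_i r_i|\le\sum_i|r_i|=D$ by the triangle inequality. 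Thus to prove $A^{(r)}$ is not cyclotomic it suffices to show that the linear coefficient of $P(t)$ exceeds its degree.

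First I would apply the Brenti--Welker formula (Lemma \ref{xxlem2.1}) to $f(t)=H_A(t)$, which writes $H_{A^{(r)}}(t)=P(t)/(1-t)^d$ with $P(t)=\sum_{i=0}^m h_i^{(r)}t^i$, $m=\max\{s,d\}$, and $h_i^{(r)}=\sum_{j=0}^s C(r-1,d,ir-j)h_j$. Since $C(r-1,d,-j)=0$ for $j\ge 1$ and $C(r-1,d,0)=1$ (Lemma \ref{xxlem2.2}(a)), the constant term is $h_0^{(r)}=h_0=1$, so the coefficient observation applies. Because every $h_j\ge 0$ and every partition number is nonnegative, the linear coefficient satisfies $h_1^{(r)}\ge C(r-1,d,r)h_0=C(r-1,d,r)$. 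For case (b) the hypothesis is precisely $C(r-1,d,r)>m\ge\deg P$, so $h_1^{(r)}>\deg P$ and the coefficient bound is violated; hence $P$ has a root that is not a root of unity and $A^{(r)}$ is not cyclotomic. Case (c) reduces to (b): here $s=0$, so $m=d$, and Lemma \ref{xxlem2.2}(c) gives $C(r-1,d,r)>d=m$ whenever $d\ge 3$.

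The delicate case is (a), where $d=2$, $s=0$, $m=2$, and $C(r-1,2,r)=r-1$ by Lemma \ref{xxlem2.2}(b); for $r=3$ this equals $2=m$, so the coefficient bound is tight and no contradiction is immediate. Here I would instead compute $P(t)$ outright: $P(t)=1+(r-1)t+C(r-1,2,2r)t^2$, and since $2r>2(r-1)$ forces $C(r-1,2,2r)=0$, the numerator is exactly $P(t)=1+(r-1)t$. This has degree $1$ and its unique root $-1/(r-1)$ has absolute value $1/(r-1)<1$ for $r\ge 3$, so it is not a root of unity; equivalently $|a_1|=r-1\ge 2>1=\deg P$. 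I expect this borderline case to be the only real obstacle: the estimate on the linear coefficient dispatches (b) and (c) uniformly, but at $r=3$ in (a) one must exploit the vanishing of the top coefficient of $P$ to exhibit an explicit non-cyclotomic root rather than relying on the coefficient inequality alone.
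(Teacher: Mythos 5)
Your proposal is correct and follows essentially the same route as the paper's proof: apply the Brenti--Welker formula, note $h_0^{(r)}=1$ and $h_1^{(r)}\ge C(r-1,d,r)$, conclude in case (b) that the numerator cannot have all its roots on the unit circle because its linear coefficient exceeds its degree, handle (a) by the explicit computation $H_{A^{(r)}}(t)=\bigl(1+(r-1)t\bigr)/(1-t)^2$, and reduce (c) to (b) via Lemma \ref{xxlem2.2}(c). The only difference is cosmetic: you make explicit the triangle-inequality bound $|a_1|\le \deg P$ for a polynomial with constant term $1$ and all roots on the unit circle, which the paper leaves implicit in the assertion that the numerator ``has a root with absolute value strictly less than 1.''
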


\begin{proof} (a) In this case it is easy to see that
$H_{A^{(r)}}(t)=\frac{1+(r-1)t}{(1-t)^2}$. Hence $A^{(r)}$ is not
cyclotomic when $r\geq 3$.

(b) Let $m=\max\{s,d\}$. Then $m<C(r-1, d,r)$ by the hypothesis. By
Lemma \ref{xxlem2.1}, $h_0^{(r)}=1$ and
$$h_1^{(r)}=C(r-1,d,r)+\sum_{j=1}^s C(r-1,d,r-j)h_j
\geq C(r-1,d,r)> m.$$ If $1+h_1^{\langle
r\rangle}t+ \cdots+ h_m^{(r)} t^m = \prod_{i=1}^m (1+r_it)$, 
then $\sum_{i=1}^m  r_i = h_1^{(r)} > m$, so 
$h_0^{(r)}+\cdots+  h_m^{(r)} t^m$ has a root with absolute value
strictly greater  than 1. Hence $A^{(r)}$ is not cyclotomic.

(c) In this case $s=0$. The assertion follows from 
Lemma \ref{xxlem2.2}(c), noting the case $r=2, d=3$ gives the Hilbert series
$(1+3t)/(1-t)^3$, which is not cyclotomic. 
\end{proof}

Quantum polynomial rings are noetherian AS regular domains 
whose Hilbert series are of the form
$(1-t)^{-d}$. The following corollary can be used to state 
precisely when the $r$th Veronese subalgebra
of a quantum polynomial ring is a complete intersection.

\begin{corollary}
\label{xxcor2.4} Let $A$ be a connected graded  algebra.
\begin{enumerate}
\item
Suppose $A=k[t]$ where $\deg t=1$. For every $r\geq 2$,
$A^{(r)}\cong k[x]$ where $\deg x=r$. So $A^{(r)}$ is AS regular, and
consequently, $A^{(r)}$ is cyclotomic and a classical complete
intersection.
\item
Suppose $A$ is noetherian of global dimension 2 with Hilbert series
$(1-t)^{-2}$.
\begin{enumerate}
\item[(i)]
$A^{(2)}$ is a classical complete intersection (and hence
cyclotomic).
\item[(ii)]
For every $r\geq 3$, $A^{(r)}$ is not cyclotomic. Consequently,
$A^{(r)}$ is not a complete intersection of any type.
\end{enumerate}
\item Suppose the Hilbert series of $A$
is $(1-t)^{-d}$. If $d\geq 3$ and  $r\geq 2$, then $A^{(r)}$ is
not cyclotomic. Consequently, $A^{(r)}$ is not a complete
intersection of any type.
\end{enumerate}
\end{corollary}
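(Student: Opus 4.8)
The plan is to settle parts (a), (b)(ii) and (c) immediately from Proposition~\ref{xxpro2.3} and the implication diagram \eqref{E1.11.2}, and to reserve the real work for part (b)(i), the only clause asserting that a Veronese subring \emph{is} a complete intersection. For (a), I would note that the homogeneous elements of $k[t]$ of degree divisible by $r$ are exactly the powers of $t^r$, so $A^{(r)}=k[t^r]\cong k[x]$ with $\deg x=r$; this is a polynomial ring, hence AS regular, so $cci(A^{(r)})=0$ by Lemma~\ref{xxlem1.6}, and its Hilbert series $1/(1-t^r)$ is patently cyclotomic.

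Parts (b)(ii) and (c) share one skeleton. Proposition~\ref{xxpro2.3} supplies non-cyclotomicity: part (a) of that proposition when $d=2$ and $r\geq 3$, and part (c) of it when $d\geq 3$ (the degenerate case $d=1$ reduces to (a) above). To upgrade ``not cyclotomic'' to ``not a complete intersection of any type'' I would run the diagram \eqref{E1.11.2} backwards. Since $A$ is noetherian, its Veronese $A^{(r)}$ is again noetherian, and by Lemma~\ref{xxlem2.1} its Hilbert series is a rational function $p(t)/q(t)$ with $p(0)=q(0)=1$. Were $A^{(r)}$ a cci, gci or nci, it would be a wci by Theorem~\ref{xxthm1.11}(a),(b), hence cyclotomic by Theorem~\ref{xxthm1.11}(c)---contradicting non-cyclotomicity.

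The substance is part (b)(i). I would begin from the classification of connected graded noetherian AS regular algebras of global dimension $2$ generated in degree one: up to isomorphism $A=k\langle x,y\rangle/(yx-qxy)$ (a quantum plane) or $A=k\langle x,y\rangle/(yx-xy-x^2)$ (the Jordan plane). In either case $A_2$ has basis $u=x^2,\ v=xy,\ w=y^2$, and after rescaling degrees $A^{(2)}$ is generated by $u,v,w$. Straightening products in the quantum-plane case yields the monomial relations $vu=q^2uv$, $wu=q^4uw$, $wv=q^2vw$ together with $v^2=q\,uw$. Letting $C$ be the quantum affine $3$-space cut out by the three monomial relations---a noetherian AS regular domain of global dimension $3$ with Hilbert series $(1-t)^{-3}$---a direct computation shows $\Omega:=v^2-q\,uw$ is normal ($\Omega u=q^4u\Omega$, $\Omega v=v\Omega$, $\Omega w=q^{-4}w\Omega$) and, being a nonzero element of a domain, regular. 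Hence there is a surjection $C/(\Omega)\twoheadrightarrow A^{(2)}$, and since both sides have Hilbert series $(1-t^2)(1-t)^{-3}=(1+t)(1-t)^{-2}$---the left by the regular normal element and the right by Proposition~\ref{xxpro2.3}(a) with $r=2$---it is an isomorphism. Thus $A^{(2)}$ is a hypersurface, in particular a cci, and therefore cyclotomic; the Jordan plane is treated by the same template with a suitably modified $C$ and $\Omega$.

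I expect the sole genuine obstacle to lie in part (b)(i): one must confirm that the chosen $C$ is indeed AS regular, that $\Omega$ is genuinely regular and normal, and---most importantly---that the straightening relations exhaust \emph{all} relations of $A^{(2)}$, which is precisely what the Hilbert-series count is engineered to guarantee. The quantum-plane computation above is clean because the commutation data are monomial; the Jordan plane, where $\Omega$ and the straightening relations are less symmetric, is the case that will require the most care.
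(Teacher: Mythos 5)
Your proposal is correct and follows essentially the same route as the paper: parts (a), (b)(ii) and (c) are read off from Proposition~\ref{xxpro2.3} together with the implications of Theorem~\ref{xxthm1.11}, and part (b)(i) is obtained from the classification of global dimension two quantum polynomial rings plus a direct presentation of $A^{(2)}$ as a hypersurface in a quantum affine $3$-space. In fact you carry out explicitly (and correctly, in the quantum-plane case) the ``direct computation from a nice presentation'' that the paper only gestures at.
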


\begin{proof} (a) This is obvious.

(bi) Quantum polynomial rings of dimension 2 are isomorphic to 
either $k_q[x,y] := k\langle x,y\rangle /(yx-qxy)$ or 
$k_J[x,y] := k\langle x,y \rangle/(xy-yx-x^2)$, and $A^{(2)}$ 
is the ring of invariants under the diagonal map sending 
$x \mapsto -x$ and $y \mapsto -y$.  It follows from 
\cite[Theorem 0.1]{KKZ4} that each of these rings of 
invariants is a hypersurface in an AS regular ring of dimension 3.

(bii) This follows from Proposition \ref{xxpro2.3}(a).

(c) This follows from Proposition \ref{xxpro2.3}(c). 
\end{proof}

Another special case is when $A$ is AS regular of global dimension
three and generated by two elements  of degree 1.

\begin{lemma}
\label{xxlem2.5} Suppose $H_A(t)=\frac{1}{(1-t)^2(1-t^2)}$.
If $r\geq 3$, then $A^{(r)}$ is not cyclotomic. Consequently,
$A^{(r)}$ is not a complete intersection of any type.
\end{lemma}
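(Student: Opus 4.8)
The first thing to notice is that the denominator factors as $(1-t)^2(1-t^2)=(1-t)^3(1+t)$, so $H_A(t)$ has a simple pole at $t=-1$ in addition to its order-$3$ pole at $t=1$. Consequently $H_A(t)$ is \emph{not} of the form $\tfrac{\text{polynomial}}{(1-t)^d}$ demanded by Lemma \ref{xxlem2.1}, and Proposition \ref{xxpro2.3} does not apply off the shelf. My plan is to reproduce the mechanism behind Proposition \ref{xxpro2.3}(b) --- that passing to the $r$th Veronese inflates the coefficient of $t$ in the numerator of the Hilbert series beyond the numerator's degree --- but to carry it out through an explicit computation that keeps track of the extra pole at $-1$.

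First I would compute $\dim A_n$ by partial fractions: one finds
\[
\dim A_n=\tfrac12\binom{n+2}{2}+\tfrac14(n+1)+\tfrac18+\tfrac18(-1)^n,
\]
a quasi-polynomial of period $2$. Since $\dim A^{(r)}_n=\dim A_{rn}$, I would then sum the series $H_{A^{(r)}}(t)=\sum_{n\ge0}\dim A_{rn}\,t^n$, splitting on the parity of $r$ because of the term $\tfrac18(-1)^{rn}$. This yields $H_{A^{(r)}}(t)=N_r(t)/(1-t)^3$ when $r$ is even and $H_{A^{(r)}}(t)=N_r(t)/\bigl((1-t)^3(1+t)\bigr)$ when $r$ is odd, where in each case $N_r(t)$ is an explicit polynomial with $N_r(0)=1$ and $\deg N_r\le 3$. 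A direct substitution gives the coefficient of $t$ in $N_r(t)$ as $\tfrac{r^2}{4}+r-2$ for $r$ even and $\tfrac{r^2}{4}+r-\tfrac54$ for $r$ odd.

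To finish I would first check that $N_r(1)\ne 0$ and $N_r(-1)\ne 0$, so that $N_r$ is coprime to the denominator and the only poles of $H_{A^{(r)}}(t)$ sit at the roots of unity $\pm 1$. Writing $N_r(t)=\prod_i(1-\beta_i t)$, the coefficient of $t$ equals $-\sum_i\beta_i$, and by the computation above its absolute value exceeds $3\ge\deg N_r$ for every $r\ge 3$; this is incompatible with all the $\beta_i$ having modulus $1$, so some $\beta_i$ has modulus $>1$. Hence $N_r$ has a zero $\alpha_i=\beta_i^{-1}$ strictly inside the unit disc, giving a zero of $H_{A^{(r)}}(t)$ that is not a root of unity; being different from $\pm 1$, it is not cancelled by any pole. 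Therefore $H_{A^{(r)}}(t)$ fails to be cyclotomic in the sense of Definition \ref{xxdef1.8}, and the final assertion that $A^{(r)}$ is not a complete intersection of any type follows from the contrapositive of the implications in \eqref{E1.11.2}, exactly as in Corollary \ref{xxcor2.4}.

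I expect the main obstacle to be the bookkeeping required to produce $H_{A^{(r)}}(t)$ explicitly: the pole at $-1$ forces the split into even and odd $r$ and makes the denominator $r$-dependent, and one must verify coprimality at $\pm 1$ to be sure that the off--unit-circle zero is genuinely present rather than an artifact that disappears after reducing the rational function to lowest terms.
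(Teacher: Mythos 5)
Your proposal is correct, but it follows a genuinely different route from the paper's. The paper computes $H_{A^{(r)}}(t)$ explicitly (in the variable $t^r$, with the non-reduced denominators $4(1-t^r)^3$ for $r$ even and $4(1-t^{2r})^5$ for $r$ odd) and then exploits the necessary condition that a numerator with integer coefficients whose roots are all roots of unity must be symmetric (palindromic up to sign); this immediately eliminates all $r\ge 3$ except $r=4$ (even case) and $r=3$ (odd case), which the paper then rules out by exhibiting the numerators $1+6t^4+t^8$ and $1+6t^3+11t^6-\cdots+4t^{24}$ and observing they are not cyclotomic. You instead extract a low-degree numerator $N_r(t)$ over the minimal denominator via the quasi-polynomial $\dim A_n=\tfrac12\binom{n+2}{2}+\tfrac14(n+1)+\tfrac18+\tfrac18(-1)^n$ and apply the elementary-symmetric-function bound $|e_1|\le\deg N_r$ for polynomials with constant term $1$ and all reciprocal roots on the unit circle --- the same mechanism as Proposition \ref{xxpro2.3}(b). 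Your coefficient computations check out: the coefficient of $t$ in $N_r$ is $\tfrac{r^2}{4}+r-2$ for $r$ even ($\deg N_r=2$) and $\tfrac{r^2}{4}+r-\tfrac54$ for $r$ odd ($\deg N_r\le3$), and both exceed the respective degree bounds for all $r\ge3$. What your approach buys is uniformity: no residual cases $r=3,4$ need separate verification, and the palindromy test (which only gives a necessary condition and so forces the extra explicit checks) is avoided. What the paper's approach buys is that the displayed Hilbert series for $r=3,4$ are of independent interest. One small remark: the coprimality check $N_r(\pm1)\ne0$ you flag as a worry is actually dispensable, since the root of $N_r$ of modulus strictly less than $1$ is in particular not $\pm1$ and therefore survives any cancellation against the denominator $(1-t)^3$ or $(1-t)^3(1+t)$ when the fraction is put in lowest terms.
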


\begin{proof} 

We compute the Hilbert series of $A^{(r)} = \sum_{k}
a_{kr}t^{kr}$, when $\sum_{j} a_j t^j$ is the expansion of
$\frac{1}{(1-t)^2(1-t^2)}$. When $r$ is even the Hilbert series of
$A^{(r)}$ is
$$\frac{(r^2-4r+4)t^{2r} + (r^2+4r-8)t^r+4}{4(1-t^r)^3}.$$
For the numerator to be symmetric it is necessary for $r$ to be a
positive integer with $r^2-4r+4 = \pm 4$ or $0$, which happens only
when $r=2$ or $r=4$.  The Hilbert series for $r=4$ is
$$ \frac{1+6t^4 + t^8}{(1-t^4)^3},$$
which is not cyclotomic. When $r$ is odd the Hilbert series of
$A^{(r)}$ is
$$\frac{b_0 + b_1t^{r} + b_2t^{2r} + b_3 t^{3r} + b_4t^{4r} +
b_5t^{5r} + b_6t^{6r} + b_7t^{7r} + b_8t^{8r} + b_9t^{9r}}{4(1-t^{2r})^5} $$
where
$$b_0=4, \;\;\;\; b_1 = 3 + 4r + r^2, \;\;\;\;b_2 = -16+8r+ 4r^2,$$
$$ b_3 = -12 -8r+4r^2,\;\;\;\; b_4 = 24-24r-4r^2, \;\;\;\;
b_5 = 18-10r^2,\;\;\;\;b_6= -16+24r-4r^2,$$
$$  b_7 = -12 + 8 r + 4r^2,\;\;\;\; b_8 = 4-8r + 4r^2,\;\;\;\;
b_9 =3-4r+r^2.$$
For the numerator to be symmetric it is necessary
that $r$ is an odd positive integer $\ge 3$  with $3-4r+r^2 = \pm 4$
or $0$, which could happen only when $r=3$.  The Hilbert series for
$r= 3$ is
$$\frac{ 1 + 6t^3 + 11 t^6  - 21 t^{12} - 18t^{15} + 5 t^{18}
+ 12t^{21} + 4t^{24}}{(1-t^{6})^5},$$
which is not cyclotomic.
\end{proof}

\begin{remark}
\label{xxrem2.6}
~\
\begin{enumerate}
\item When $A$ is a noetherian AS regular ring of global
dimension three that is generated by two elements of degree 1,
then $H_A(t)=\frac{1}{(1-t)^2(1-t^2)}$ and $A^{(2)}$ is a cci by 
\cite[Proposition 1.3]{VdB}.
\item If $A$ is noetherian and AS Gorenstein of dimension $d$
then by \cite[Theorem 3.6]{JZ} $A^{(r)}$ is AS Gorenstein if and only
if $r$ divides $\ell$ (where $\ell$ is the AS index).
\end{enumerate}
\end{remark}

\section{Invariant theory of AS regular algebras and quasi-bireflections}
\label{xxsec3}

In this section we connect the study of complete
intersections with noncommutative invariant theory. First we review
some definitions.


\begin{definition}
\label{xxdef3.1} Let $A$ be a noetherian algebra.
\begin{enumerate}
\item
Given any $A$-module $M$, the $j$-number of $M$ is defined by
$$j(M)=\min\{i \mid \Ext^i(M,A)\neq 0\}\in {\mathbb N}\cup
\{\infty\}.$$
\item
$A$ is called {\it Auslander Gorenstein} if
\begin{enumerate}
\item
$A$ has finite left and right injective dimension;
\item
for every finitely generated left $A$-module $M$ and for every right
$A$-submodule $N\subset \Ext^i_A(M, A)$, $j(N)\geq i$,
\item
the above condition holds when left and right are switched.
\end{enumerate}
\item
$A$ is called {\it Auslander regular} if $A$ is Auslander Gorenstein
and has finite global dimension.
\item $A$ is called {\it
Cohen-Macaulay} if, for every finitely generated (left or right)
$A$-module $M$, $\GKdim (M)+j(M)=\GKdim A<\infty$.
\end{enumerate}
\end{definition}

In a couple of places we also use dualizing complexes, as well as
the Auslander and
Cohen-Macaulay properties of dualizing complexes, all of which are
quite technical. We refer to the paper \cite{YZ} for these
definitions. For a graded algebra, the existence of an Auslander
Cohen-Macaulay dualizing complex was proved for many classes of
noetherian algebras. It is conjectured that every
noetherian AS regular algebra is Auslander regular.

The notion of the {\em homological determinant of} $g \in G$, for 
$G$ a group of automorphisms of an AS-Gorenstein algebra $A$, 
was  defined by J{\o}rgensen and Zhang in \cite[Definition 2.3]{JZ}, 
using the local cohomology of $A$.  It is a particular group 
homomorphism from $\Aut{A}\rightarrow  k^*$.  We refer to \cite{JZ} 
for the definition and properties of the homological determinant.  
For the examples in this paper the homological determinant can be 
computed using Lemma \ref{xxlem3.6} below.

\begin{lemma}
\label{xxlem3.2} Let $E$ be the $\Ext$-algebra of $A$. Suppose $A$
is a nci and $E$ has an Auslander Cohen-Macaulay dualizing complex.
Then $nci(A)\leq gci(A)<\infty$.
\end{lemma}

\begin{proof} By using \cite[Corollary 2.18]{YZ} (for $d_0=0$
and $\Cdim=\GKdim$), we obtain $\Kdim E\leq \GKdim
E<\infty$, and the assertions follow.
\end{proof}

Combining Theorem \ref{xxthm1.12} with Lemma \ref{xxlem3.2}, under
some reasonable conditions, we have
\begin{equation}\label{E3.3.1}\tag{E3.3.1}
cci(A)\geq gci(A)\geq nci(A).
\end{equation}

Next we recall Molien's Theorem which will be used several times later. 
Let $A$ be a noetherian connected graded AS regular algebra and 
$g$ be a graded automorphism of $A$. The {\it trace} of $g$ acting 
on $A$ is defined to be the formal power series $Tr_A(g,t) :=  
\sum_{i=0}^\infty Tr (g|_{A_i}) t^i$, where $Tr (g|_{A_i})$ is 
the trace of the linear map $g$ restricted to the space of 
homogeneous elements of degree $i$.  Traces can be used to compute 
the Hilbert series of fixed rings.

\begin{theorem}[Molien's Theorem] 
{\rm{(}}\cite[Lemma 5.2]{JiZ}{\rm{)}} 
\label{xxthm3.3} 
Let $A$ be a connected graded $k$-algebra and let $G$ be a finite group of 
graded automorphisms of $A$.  Then the Hilbert series of the fixed subring is
$$H_{A^G}(t) \frac{1}{|G|} \sum_{g \in G} Tr_A(g,t).$$
\end{theorem}

Now we are ready to prove Theorem \ref{xxthm0.4}.

\begin{theorem}
\label{xxthm3.4} Assume that $k$ is of characteristic zero.
Let $R$ be a connected graded noetherian
Auslander regular algebra and $G$ be a finite subgroup of $\Aut(R)$. If
$R^G$ is a wci, then it is cyclotomic Gorenstein.  In particular, 
if $R^G$ is a (gci) or an (nci) then it is cyclotomic Gorenstein.
\end{theorem}

\begin{proof} By \cite[Corollaries 1.12 and 5.9]{Mo}, $R^G$ is 
noetherian (and $R$ is finite over $R^G$).
By Molien's Theorem (Theorem \ref{xxthm3.3}), 
the Hilbert series of $R^G$ is of the form
$\frac{1}{|G|} \sum_{g\in G} Tr_{R}(g,t)$ and each $Tr_{R}(g,t)$
is of the form $e_g(t)^{-1}$ by \cite[Theorem 2.3(4)]{JiZ}. By 
\cite[Lemma 1.6(e)]{KKZ1}, the zeros of $e_g(t)$ are all roots of
unity. So we can write $e_g(t)$ as $\frac{p_g(t)}{q_g(t)}$ 
where $p_g(0)=q_g(0)=1$ and $q_g(t)\in {\mathbb Z}[t]$. Let $q(t)$
be the common multiple of $q_g(t)$ for all $g$, then $H_{R^G}(t)
=\frac{p(t)}{q(t)}$ where $q(0)=1$, $q(t)\in {\mathbb Z}[t]$
and $p(t)\in {\mathbb C}[t]$. Since $p(t)=q(t) H_{R^G}(t)$,
$p(0)=1$ and $p(t)\in {\mathbb Z}[t]$. 

Let $\sigma$ be any graded algebra automorphism of $R$
and let $M$ be any finitely generated graded left $R$-module. 
Since $R$ is noetherian and AS regular, by \cite[Proposition 4.2]{JZ},
every $\sigma$-linear automorphism $f: M\to M$ is rational over 
$k$ in the sense of \cite[Definition 1.3]{JZ}. By 
\cite[Lemma 6.3]{JZ}, every module automorphism of a 
finitely generated graded left $R^G$-module is 
rational over $k$ in the sense of \cite[Definition 1.3]{JZ}. 
As a special case (by taking the automorphism to be the 
identity map of $R^G$), 
the Hilbert series of $R^G$ is a rational function. 
By Theorem \ref{xxthm1.12}(c), $R^G$ is
cyclotomic. It remains to show that $R^G$ is AS Gorenstein.

Write $H_{R^G}(t)=p(t)/q(t)$ with $p(t),q(t)\in {\mathbb Z}[t]$
and $p(0)=q(0)=1$. Further assume that $p(t)$ and $q(t)$ are co-prime. 
By the proof of Theorem \ref{xxthm1.12}(c), every root of $p(t)$ is a root 
of unity. Since $p(t)$ is an integral polynomial, we 
have $p(t^{-1})=\pm t^{d_1}
p(t)$ for some $d_1$. Since $R^G$ has finite GK-dimension, every
root of $q(t)$ is a root of unity (see the proof of \cite[Corollary
2.2]{SZ}). Hence $q(t^{-1})=\pm t^{d_2} q(t)$ for some $d_2$. It
follows from \cite[Theorem 6.4]{JZ} that $R^G$ is AS Gorenstein.
\end{proof}

When $A=R^G$ where $R$ is a connected graded noetherian Auslander
regular algebra and $G$ is a finite subgroup of $\Aut(R)$, we can
modify the diagram \eqref{E1.12.2} a little by changing
``cyclotomic" to ``cyclotomic Gorenstein".

This is a good place to mention a natural question.

\begin{question}
\label{xxque3.5} Let $A$ be a noetherian connected graded algebra
that is either a nci or a gci. Under what conditions must $A$ be
AS Gorenstein (or Gorenstein)?
\end{question}

Example \ref{xxex6.3} shows that  $A$ can be both a nci and a gci
yet still not be Gorenstein. However Theorem \ref{xxthm3.4} says
that for $A=R^G$, where $R$ is a noetherian Auslander regular
algebra, then $A$ must be AS Gorenstein.

Traces can also be used to compute the homological determinant of $g$.  

\begin{lemma}
\label{xxlem3.6} \cite[Lemma 2.6]{JZ}
Let $A$ be AS Gorenstein of injective dimension $d$ and let
$g\in \Aut(A)$. Suppose $g$ is $k$-rational in the sense of
\cite[Definitions 1.3]{JZ} {\rm{(}}e.g., if $A$ is AS regular, 
or $A$ is PI{\rm{)}}. Then the trace of $g$ is of the form
\begin{equation}
\label{E3.6.1}\tag{E3.6.1}
Tr_A(g,t)=(-1)^d (\hdet g)^{-1} t^l+{\text{lower degree terms}},
\end{equation}
when we express $Tr_A(g,t)$ as a Laurent series in $t^{-1}$.
\end{lemma}

If $A=k[x_1, \cdots, x_n]$ and $\deg(x_i) = 1$, then 
$$Tr_A(g,t) =\frac{1}{\prod(1-\lambda_it)}$$
where the product is taken over all $n$ eigenvalues $\lambda_i$ of $g$.  
Other basic properties of the trace of $g$ can be found in \cite{JiZ}. 

\begin{definition}
\label{xxdef3.7} Let $A$ be a noetherian connected graded AS
regular algebra of GK-dimension $n$. Let $g\in \Aut(A)$.
\begin{enumerate}
\item \cite[Definition 2.2]{KKZ1}
We call $g$  a {\it quasi-reflection} if its trace has the form:
$$Tr_A(g,t) = \frac{1}{(1-t)^{n-1} q(t)}$$
where $q(t)$ is an integral polynomial with $q(1) \neq
0$.
\item
We call $g$ a {\it quasi-bireflection} if its trace has the form:
$$Tr_A(g,t) = \frac{1}{(1-t)^{n-2} q(t)}$$
where $q(t)$ is an integral polynomial with $q(1) \neq 0$.
\end{enumerate}
As the classical case, a quasi-reflection may also viewed as a
quasi-bireflection for convenience.
\end{definition}

\begin{definition}
\label{xxdef3.8} Let $A$ be a noetherian AS regular algebra and let
$W$ be a subgroup of $\Aut(A)$.
\begin{enumerate}
\item
The $cci$-$W$-bound of $A$ is defined to be
$$cci(W/A)=\sup\{cci(A^G)\mid  {\text{$\forall$ finite subgroups
$G\subset W$ such that $cci(A^G)<\infty$}} \}.$$
\item
The $gci$-$W$-bound of $A$ is defined to be
$$gci(W/A)=\sup\{gci(A^G)\mid  {\text{$\forall$ finite subgroups
$G\subset W$ such that $gci(A^G)<\infty$}} \}.$$
\item
The $nci$-$W$-bound of $A$ is defined to be
$$nci(W/A)=\sup\{nci(A^G)\mid  {\text{$\forall$ finite subgroups
$G\subset W$ such that $nci(A^G)<\infty$}} \}.$$
\item
The $cyc$-$W$-bound of $A$ is defined to be
$$cyc(W/A)=\sup\{cyc(A^G)\mid  {\text{$\forall$ finite subgroups
$G\subset W$ such that $cyc(A^G)<\infty$}} \}.$$
\end{enumerate}
\end{definition}

\begin{example} 
\label{xxex3.9} Let $A$ be a noetherian AS regular algebra of global
dimension two that is generated in degree 1. Then by \cite[Theorem
0.1]{KKZ4},
$$cci(\Aut(A)/A)=gci(\Aut(A)/A)=nci(\Aut(A)/A)=cyc(\Aut(A)/A)=1.$$
\end{example}

\section{Graded twists}
\label{xxsec4}

In this section we will prove Theorem \ref{xxthm0.5} using the 
technique of graded twists developed in \cite{Z2}. We refer to 
results in that paper, noting that \cite{Z2} uses right modules, 
while we are using left modules.

Let $A$ be an
${\mathbb Z}^d$-graded algebra and $\Aut_{{\mathbb Z}^d}(A)$ be the
group of ${\mathbb Z}^d$-graded algebra automorphisms of $A$. Let
$\{\tau_1,\cdots,\tau_d\}$ be a set of commuting elements in
$\Aut_{{\mathbb Z}^d}(A)$. We define a twisting system of $A$ by
$\tau_{g}=\tau_1^{g_1}\cdots \tau_d^{g_d}$ for every
$g=(g_1,\cdots,g_d) \in {\mathbb Z}^d$. Then $\tau=\{\tau_g\mid g\in
{\mathbb Z}^d\}$ is a left (respectively, right) twisting system in
the sense of \cite[Definition 2.1]{Z2}. Let $A^\tau$ be the graded
twisted algebra by the twisting system $\tau$ \cite[Proposition and
Definition 2.3]{Z2}. Let $A-\Gr$ be the category of graded left
$A$-modules and for a graded left $A$-module $M$ let 
$M^\tau$ be the associated twisted graded left $A^\tau$-module 
\cite[Proposition and Definition 2.6]{Z2}. Then the
assignment $F: M\mapsto M^\tau$ defines an equivalence of categories
\cite[Theorem 3.1]{Z2}
$$A-\Gr \cong A^\tau-\Gr.$$
Let $(g)$ denote the degree $g$ shift of a graded module. Let $M$
and $N$ be finitely generated graded left modules over a left
noetherian ring $A$. Then we have
$$\Ext_A^i(M,N)\cong \bigoplus_{g\in {\mathbb
Z}^d}\Ext^i_{A-\Gr}(M,N(g)).$$

\begin{lemma}
\label{xxlem4.1} Let $A$ be an ${\mathbb N}^d$-graded finitely
generated algebra such that $A$ becomes a connected ${\mathbb
N}$-graded when taking the total degree. Suppose
$\tau=\{\tau_1,\cdots, \tau_d\}$ is a set of commuting elements in
$\Aut_{{\mathbb Z}^d}(A)$. Then $A$ is a gci if and only if $A^\tau$
is.
\end{lemma}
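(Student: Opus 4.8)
The plan is to reduce the statement to a comparison of the graded dimensions of the Ext-algebras $E(A)$ and $E(A^\tau)$. By Definition \ref{xxdef1.4} the gci property depends only on $\GKdim E(A)$, and by Definition \ref{xxdef1.1}(c) this number (computed for the cohomological grading) is determined entirely by the sequence $\dim_k \Ext^i_A(k,k)$. So it suffices to prove $\dim_k \Ext^i_A(k,k) = \dim_k \Ext^i_{A^\tau}(k,k)$ for every $i$; equality of the two sequences then forces $\GKdim E(A) = \GKdim E(A^\tau)$, and hence $A$ is a gci if and only if $A^\tau$ is.

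First I would record that $A^\tau$ shares the underlying $\mathbb{N}^d$-graded vector space of $A$ (twisting only alters the multiplication), so $A^\tau$ is again connected $\mathbb{N}$-graded, locally finite, and finitely generated when equipped with the total degree; thus the hypotheses of Definition \ref{xxdef1.4} apply to both algebras. Next I would use the twisting equivalence of categories $F\colon A-\Gr \xrightarrow{\ \sim\ } A^\tau-\Gr$, $M\mapsto M^\tau$, recorded above. The central computation is that $F$ fixes the trivial module and all of its $\mathbb{Z}^d$-shifts: the module $k(g)$ is one-dimensional, concentrated in a single multidegree, with $A_{\geq 1}$ acting as zero, so neither the underlying graded vector space nor the (zero) action is changed by the twist, giving $k^\tau\cong k$ and $(k(g))^\tau \cong k(g)$ in $A^\tau-\Gr$. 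Because any equivalence of abelian categories induces isomorphisms on all $\Ext^i$, I would conclude
\[
\Ext^i_{A-\Gr}(k, k(g)) \cong \Ext^i_{A^\tau-\Gr}(k, k(g))
\]
for every $i\geq 0$ and every $g\in \mathbb{Z}^d$.

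Finally I would assemble these pieces using the decomposition recorded just before the statement: since $k$ is finitely generated,
\[
\Ext^i_A(k,k) \cong \bigoplus_{g\in \mathbb{Z}^d}\Ext^i_{A-\Gr}(k, k(g)),
\]
and likewise for $A^\tau$. Taking $k$-dimensions and summing the isomorphisms of the previous step over all $g$ yields $\dim_k \Ext^i_A(k,k) = \dim_k \Ext^i_{A^\tau}(k,k)$ for all $i$, which is exactly the input needed for the reduction in the first paragraph. (In fact the same argument matches the full $\mathbb{Z}^d\times\mathbb{N}$-graded Hilbert series of the two Ext-algebras.)

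I expect the main obstacle to be the verification that the twisting functor really fixes $k$ and its shifts, and more precisely that its interaction with the $\mathbb{Z}^d$-shift is harmless for these particular modules; this rests on the explicit description of $M^\tau$ in \cite{Zh2} together with the triviality of the $A$-action on $k(g)$, which is what makes any potential shift-versus-twist discrepancy irrelevant here. A secondary point to check is the validity of the decomposition of the total $\Ext$ into its internal-degree components, so that the per-degree isomorphisms add up correctly; this is why it is worth noting at the outset that $k$ is finitely generated and that both $A$ and $A^\tau$ are locally finite connected graded algebras.
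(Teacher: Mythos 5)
Your proposal is correct and follows essentially the same route as the paper: both reduce the claim to the dimension count $\dim_k\Ext^i_A(k,k)=\dim_k\Ext^i_{A^\tau}(k,k)$, obtain it from the twisting equivalence $F\colon A\text{-}\Gr\to A^\tau\text{-}\Gr$ together with the observation that $F$ fixes $k(g)$ for all $g\in\mathbb{Z}^d$, and assemble the answer via the decomposition $\Ext^i_A(k,k)\cong\bigoplus_{g}\Ext^i_{A\text{-}\Gr}(k,k(g))$. The only cosmetic difference is that the paper deduces $F(k(g))\cong k(g)$ from the classification of graded simples, while you argue it directly from the triviality of the action on a one-dimensional module; both are fine.
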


\begin{proof} Let $B=A^\tau$. It suffices to show that $\Ext^i_A(k,k)
\cong \Ext^i_B(k,k)$ as graded vector space. Let $F$ be the functor
sending $M$ to $M^\tau$. Since $A$ is connected graded by using the
total degree, one can check that the only simple graded module over
$A$ is $k(g)$ for $g\in {\mathbb Z}^d$. Using this fact we obtain
that $F(_Ak(g))\cong _{B} k(g)$ for all $g\in {\mathbb Z}^d$. Hence
$$\begin{aligned}
\Ext^i_A(k,k)&=\bigoplus_{g\in {\mathbb Z}^d}
\Ext^i_{A-\Gr}(k,k(g))\cong \bigoplus_{g\in {\mathbb Z}^d}
\Ext^i_{B-\Gr}(F(k),F(k(g)))\\
&\cong \bigoplus_{g\in {\mathbb Z}^d}
\Ext^i_{B-\Gr}(k,k(g))=\Ext^i_B(k,k).
\end{aligned}
$$
\end{proof}

\begin{lemma}
\label{xxlem4.2} Let $A$ and $\tau$ be as in Lemma \ref{xxlem4.1}.
Let $G$ be a subgroup of $\Aut_{{\mathbb Z}^d}(A)$ such that every
element $g\in G$ commutes with $\tau_i$ for all $i=1,\cdots,d$. Then
\begin{enumerate}
\item
$G$ is a subgroup of $\Aut_{{\mathbb Z}^d}(A^\tau)$ under the
identification $A=A^\tau$ (as a graded vector space).
\item
The restriction of $\tau$ on $A^G$ defines a twisting system of
$A^G$, which we still denote by $\tau$.
\item
$(A^\tau)^G$ is a graded twist of $A^G$ by $\tau$.
\end{enumerate}
\end{lemma}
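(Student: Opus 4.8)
All three parts rest on the same small toolkit, so I would first record it. Recall (from \cite{Zh2}) that $A^\tau$ has the same underlying ${\mathbb Z}^d$-graded vector space as $A$, that the twisted product of homogeneous elements is $a\ast b=a\cdot \tau_{|a|}(b)$ where $|a|\in{\mathbb Z}^d$ is the degree of $a$ and $\cdot$ is the product of $A$, and that the hypothesis ``$g$ commutes with each $\tau_i$'' upgrades to ``$g$ commutes with each $\tau_h=\tau_1^{h_1}\cdots\tau_d^{h_d}$'' for every $h\in{\mathbb Z}^d$. The mechanism used throughout is the interaction of three facts about each $g\in G$: it is an algebra automorphism of $(A,\cdot)$, it preserves the ${\mathbb Z}^d$-degree, and it commutes with every $\tau_h$. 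For part (a) I would take $g\in G$ and homogeneous $a,b$ with $|a|=p$ and compute directly
$$g(a\ast b)=g\bigl(a\cdot\tau_p(b)\bigr)=g(a)\cdot g(\tau_p(b))=g(a)\cdot\tau_p(g(b))=g(a)\ast g(b),$$
where the second equality uses that $g$ is an automorphism of $(A,\cdot)$, the third that $g$ commutes with $\tau_p$, and the last that $g$ preserves degree so $|g(a)|=p$. Thus each $g$ is a ${\mathbb Z}^d$-graded algebra automorphism of $A^\tau$; since the underlying linear maps are unchanged, the inclusion $G\hookrightarrow\Aut_{{\mathbb Z}^d}(A)$ becomes an injective homomorphism $G\hookrightarrow\Aut_{{\mathbb Z}^d}(A^\tau)$, giving (a).

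For part (b) the first point is that each $\tau_i$ stabilizes $A^G$: if $a\in A^G$ then for every $g\in G$ we have $g(\tau_i(a))=\tau_i(g(a))=\tau_i(a)$, so $\tau_i(a)\in A^G$; the same applies to $\tau_i^{-1}$, so $\tau_i|_{A^G}$ is a ${\mathbb Z}^d$-graded automorphism of $A^G$. These restrictions are algebra automorphisms of the subalgebra $A^G$ and still commute, so the family $\{\tau_h|_{A^G}\mid h\in{\mathbb Z}^d\}$ defined by the same product formula satisfies the twisting-system axiom (which holds automatically for any commuting family of algebra automorphisms with $\tau_m\tau_n=\tau_{m+n}$). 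This is the twisting system of $A^G$ asserted in (b).

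For part (c) I would identify underlying graded vector spaces first and then match products. Both $(A^\tau)^G$ and $(A^G)^\tau$ carry the underlying ${\mathbb Z}^d$-graded space $A^G$: on one hand $G$ acts on $A^\tau$ through the very same linear maps as on $A$ by part (a), so $(A^\tau)^G$ and $A^G$ are the same graded subspace of $A=A^\tau$; on the other hand twisting never alters the underlying graded space. It then remains to compare multiplications on $A^G$. For homogeneous $a,b\in A^G$ with $|a|=p$, the product in $(A^\tau)^G$ is the restriction of $\ast$, namely $a\cdot\tau_p(b)$, while the product in $(A^G)^\tau$ is $a\cdot(\tau_p|_{A^G})(b)=a\cdot\tau_p(b)$; these coincide. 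Hence $(A^\tau)^G=(A^G)^\tau$ as graded algebras, which is exactly the assertion of (c).

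There is no deep obstacle here; the content is entirely bookkeeping, and the single hypothesis doing the real work is the commutation of $G$ with the $\tau_i$. The one place needing genuine care is the interaction between (b) and (c): the twisted product on $A^G$ only makes sense if $\tau_p(b)\in A^G$, so that $a\cdot\tau_p(b)$ is computed inside $A^G$ rather than merely inside $A$, and this is precisely what part (b) supplies. For this reason I would prove the parts in the stated order (a)$\to$(b)$\to$(c), since (a) furnishes the $G$-action on $A^\tau$ used in (c) and (b) furnishes the well-definedness of the twist of $A^G$ also used in (c).
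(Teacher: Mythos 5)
Your proposal is correct and follows essentially the same route as the paper: the same degree-preservation/commutation computation for (a), the same restriction argument for (b), and the same identification of underlying graded subspaces plus matching of multiplications for (c). The only cosmetic difference is your twisting convention $a*b=a\cdot\tau_{|a|}(b)$ versus the paper's $x*y=\tau^{\deg y}(x)\cdot y$, which does not affect the argument.
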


\begin{proof} Let $B=A^\tau$ with multiplication $*$. By definition,
$A=B$ as graded vector spaces. The difference between $A$ and $B$ is
in their multiplication, see \cite[Proposition and Definition
2.3]{Z2}.

(a) For every $g\in G$, define $g_B: B\to B$ to be the graded vector
space automorphism $g$. We need to show that this is an algebra
homomorphism. For any $x,y\in B$,
$$\begin{aligned}
g_B(x* y)&=g(\tau^{\deg y}(x)y)=g(\tau^{\deg y}(x)) g(y)
\\
&=\tau^{\deg y}(g(x))g(y)=g(x)* g(y)=g_B(x)* g_B(y).
\end{aligned}$$
Therefore $g_B$ is a graded algebra automorphism of $B$, and the
assertion follows.

(b) Since $\tau_i$ commutes with $G$, each $\tau_i$ induces an
automorphism of $A^G$ by restriction. Since the restrictions of
$\tau_i$ on $A^G$ commute with each other, $\tau$ defines a twisting
system of $A^G$.

(c) As graded subspaces of $A$, we have $(A^G)^\tau=A^G=(A^\tau)^G$.
By using the twisting, we see that the multiplications of
$(A^G)^\tau$ and $(A^\tau)^G$ are the same. In fact,  $(A^G)^\tau=
(A^\tau)^G$ as subalgebras of $A^\tau$.
\end{proof}

\begin{proposition}
\label{xxpro4.3} Let $(A, \tau, G)$ be as in Lemma \ref{xxlem4.2}.
\begin{enumerate}
\item
$(A^\tau)^G$ is a gci if and only if $A^G$ is.
\item
For every $g\in G$, $Tr_A(g_A,t)=Tr_{A^\tau}(g_{A^\tau},t)$.
\item
$G\subset \Aut_{{\mathbb Z}^d}(A)\subset \Aut(A)$ is generated by
quasi-bireflections of $A$ if and only if $G\subset \Aut_{{\mathbb
Z}^d}(A^\tau)\subset \Aut(A^\tau)$ is generated by
quasi-bireflections of $A^\tau$.
\end{enumerate}
\end{proposition}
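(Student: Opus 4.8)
The plan is to deduce all three parts from the structural results already in place, exploiting the basic feature of a Zhang twist that it alters only the multiplication while leaving the underlying graded vector space untouched. For part~(a) I would apply Lemma~\ref{xxlem4.1} to the invariant subalgebra $A^G$ rather than to $A$ itself. By Lemma~\ref{xxlem4.2}(b) the restriction of $\tau$ is a twisting system of $A^G$, and Lemma~\ref{xxlem4.2}(c) identifies $(A^\tau)^G=(A^G)^\tau$ as subalgebras of $A^\tau$. After checking that $A^G$ meets the hypotheses of Lemma~\ref{xxlem4.1}---it inherits the $\mathbb{N}^d$-grading, is connected for the total grading, and is finitely generated---Lemma~\ref{xxlem4.1} yields $\Ext^i_{A^G}(k,k)\cong\Ext^i_{(A^G)^\tau}(k,k)$ as graded vector spaces for every $i$. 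Hence $\GKdim E(A^G)=\GKdim E((A^\tau)^G)$, and $A^G$ is a gci if and only if $(A^\tau)^G$ is.

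Part~(b) is immediate from the construction. By Lemma~\ref{xxlem4.2}(a) the automorphism $g_{A^\tau}$ is defined to be the very same graded vector-space map as $g_A$, and $A$ and $A^\tau$ agree as $\mathbb{Z}^d$-graded, hence total-$\mathbb{N}$-graded, vector spaces. Thus $g$ acts by identical linear operators on $A_i$ and on $(A^\tau)_i$ in each total degree $i$, and the traces coincide term by term, giving $Tr_A(g_A,t)=Tr_{A^\tau}(g_{A^\tau},t)$. For part~(c) I would observe that whether $g$ is a quasi-bireflection (Definition~\ref{xxdef3.6}) depends only on its trace series and on the GK-dimension $n$ of the ambient algebra. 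Since $A^\tau=A$ as graded vector spaces, the two algebras have the same Hilbert series, hence the same GK-dimension $n$, and by \cite{Zh2} a graded twist of an AS regular algebra is again AS regular, so the quasi-bireflection condition (with the same exponent $n-2$) is meaningful on both sides. Combining this with the trace identity of part~(b) shows that $g$ is a quasi-bireflection of $A$ exactly when it is one of $A^\tau$; since $G$ is the same group under the identification $A=A^\tau$, it contains the same subset of quasi-bireflections, and therefore it is generated by quasi-bireflections of $A$ if and only if it is generated by quasi-bireflections of $A^\tau$.

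The genuine content, as opposed to bookkeeping, lies in the hypothesis-checking. For part~(a) one must confirm that $A^G$ is finitely generated (and left noetherian, as required for the graded $\Ext$ decomposition underlying Lemma~\ref{xxlem4.1}), which relies on $\ch k=0$ and $|G|<\infty$. For part~(c) the point to cite carefully is the invariance of the Hilbert series and of AS regularity under Zhang twisting, since that is precisely what guarantees the exponent $n-2$ is the same for $A$ and $A^\tau$; I expect this to be the main step to pin down, after which parts~(a), (b) and Lemmas~\ref{xxlem4.1}--\ref{xxlem4.2} render everything else formal.
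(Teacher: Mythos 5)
Your proposal is correct and follows essentially the same route as the paper: part (a) via Lemma \ref{xxlem4.2}(c) identifying $(A^\tau)^G$ as a graded twist of $A^G$ and then invoking Lemma \ref{xxlem4.1}, part (b) from the equality of $A$ and $A^\tau$ as graded vector spaces with $g_A=g_{A^\tau}$, and part (c) from (b) together with the definition of quasi-bireflection. The extra hypothesis-checking you flag (finite generation of $A^G$, invariance of the Hilbert series and hence of the exponent $n-2$ under twisting) is sound detail that the paper leaves implicit, not a different argument.
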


\begin{proof} (a) By Lemma \ref{xxlem4.2}(c), $(A^\tau)^G$ is
a graded twist of $A^G$. The assertion follows from Lemma
\ref{xxlem4.1}.

(b) This follows from the fact that $A=A^\tau$ as an ${\mathbb
N}$-graded vector space and $g_A=g_{A^{\tau}}$ as a graded vector
space automorphism.

(c) Follows from part (b) and the definition of quasi-bireflection.
\end{proof}

Now we consider the skew polynomial ring
$B=k_{p_{ij}}[x_1,\cdots,x_d]$ which is generated by
$\{x_1,\cdots,x_d\}$ and subject to the relations
$$x_jx_i=p_{ij}x_ix_j$$
for all $i<j$ where $\{p_{ij}\}_{1\leq i<j\leq w}$ is a set of
nonzero scalars.

This is a ${\mathbb Z}^d$-graded algebra when setting $\deg x_i=(0,\cdots,
0,1,0,\cdots ,0)=:e_i$, where $1$ is in the $i$th position. We say an
automorphism $g$ of $A$ is {\it diagonal} if $g(x_i)=a_i x_i$ for some
$a_i\in k^\times$. Then every ${\mathbb Z}^d$-graded algebra
automorphism of $A$ is diagonal. As a consequence, $\Aut_{{\mathbb
Z}^d}(A)=(k^\times)^{d}$.

\begin{lemma}
\label{xxlem4.4} Let $B$ be the skew polynomial ring $k_{p_{ij}}
[x_1,\cdots,x_d]$ and $G$ a finite subgroup of $\Aut_{{\mathbb
Z}^d}(A)$.
\begin{enumerate}
\item\cite[Lemma 3.2]{KKZ2}
 $B$ is a ${\mathbb Z}^d$-graded twist of the
commutative polynomial ring by $\tau= (\tau_1,\cdots,\tau_d)$ where
$\tau_i$ is defined by $\tau_i(x_s)=\begin{cases}p_{is}  x_s & i<s\\
x_s & i\geq s\end{cases}$ for all $s$.
\item
$G$ commutes with $\tau_i$.
\item
If $B^G$ is a gci, then $G$ is generated by quasi-bireflections.
\end{enumerate}
\end{lemma}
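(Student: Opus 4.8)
The plan is to prove Lemma \ref{xxlem4.4}(c) by combining the graded-twist machinery already developed in this section with the corresponding statement in the commutative case. The key observation is that part (a) expresses the skew polynomial ring $B=k_{p_{ij}}[x_1,\cdots,x_d]$ as a ${\mathbb Z}^d$-graded twist $A^\tau$ of the commutative polynomial ring $A=k[x_1,\cdots,x_d]$ by the explicit twisting system $\tau=(\tau_1,\cdots,\tau_d)$, and part (b) asserts that every $g\in G$ commutes with each $\tau_i$. This is precisely the setup $(A,\tau,G)$ required to invoke Proposition \ref{xxpro4.3}, so the strategy is to transport the hypothesis and conclusion back and forth between $B$ and the commutative ring $A$.

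First I would verify the two structural ingredients so that Proposition \ref{xxpro4.3} applies. For part (b), since $G\subset\Aut_{{\mathbb Z}^d}(A)$ consists of diagonal automorphisms $g(x_i)=a_ix_i$ and each $\tau_i$ is also diagonal on the generators (by the formula in part (a)), the two commute on generators and hence on all of $A$; this is a routine computation that I would not belabor. With (a) and (b) in hand, $B=A^\tau$ and the triple $(A,\tau,G)$ satisfies the hypotheses of Lemma \ref{xxlem4.2}, so Proposition \ref{xxpro4.3} is available.

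Next I would run the transport argument. Assume $B^G=(A^\tau)^G$ is a gci. By Proposition \ref{xxpro4.3}(a), $(A^\tau)^G$ is a gci if and only if $A^G$ is a gci, so $A^G$ is a gci, where now $A$ is the \emph{commutative} polynomial ring. At this point I would appeal to the commutative Kac--Watanabe--Gordeev theorem (cited in the introduction via \cite{KW,G1}): if $A=k[x_1,\cdots,x_d]$ and $A^G$ is a complete intersection, then $G$ is generated by bireflections of $A_1$, i.e.\ elements $g$ with $\mathrm{rank}(g-I)\leq 2$. Since for the commutative polynomial ring the notion (gci) coincides with being a classical commutative complete intersection by Lemma \ref{xxlem1.7}, the hypothesis ``$A^G$ is a gci'' supplies exactly what Kac--Watanabe--Gordeev needs, and we conclude that $G$ is generated by bireflections of $A$. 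Finally, Proposition \ref{xxpro4.3}(c) states that $G$ is generated by quasi-bireflections of $A$ if and only if it is generated by quasi-bireflections of $A^\tau=B$, which yields the conclusion that $G$ is generated by quasi-bireflections of $B$.

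The main obstacle I anticipate is the bridge between ``bireflection of the commutative $A_1$'' and ``quasi-bireflection'' in the sense of Definition \ref{xxdef3.6}, which is phrased in terms of the trace $Tr_A(g,t)$ rather than the rank of $g-I$. I would address this by a direct trace computation for diagonal $g$ acting on the commutative polynomial ring: if $g$ has eigenvalues $\lambda_1,\cdots,\lambda_d$ on $A_1$, then $Tr_A(g,t)=\prod_{i=1}^d (1-\lambda_i t)^{-1}$, and the condition $\mathrm{rank}(g-I)\leq 2$ (exactly two $\lambda_i\neq 1$) forces $Tr_A(g,t)=\frac{1}{(1-t)^{d-2}q(t)}$ with $q(1)\neq 0$, matching the definition of quasi-bireflection; the quasi-reflection case $\mathrm{rank}(g-I)\leq 1$ is subsumed by the convention at the end of Definition \ref{xxdef3.6}. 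Thus ``bireflection of $A_1$'' and ``quasi-bireflection of $A$'' agree for diagonal automorphisms of the commutative polynomial ring, closing the gap and completing the argument.
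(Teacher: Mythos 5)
Your proposal is correct and follows essentially the same route as the paper's own proof: twist back to the commutative polynomial ring via Proposition \ref{xxpro4.3}(a), invoke Kac--Watanabe--Gordeev together with the equivalence of (gci) and complete intersection in the commutative case, and transport the quasi-bireflection property back via Proposition \ref{xxpro4.3}(c). The paper states the bireflection-to-quasi-bireflection bridge only parenthetically, so your explicit trace computation for diagonalizable $g$ is a welcome (and accurate) elaboration rather than a deviation.
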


\begin{proof} (a,b) are checked directly.

(c) By part (a) $B=A^\tau$ where $A=k[x_1,\cdots,x_d]$. If $B^G$
is a gci, by Proposition \ref{xxpro4.3}(a), $A^G$ is a gci.
Since $A=k[x_1,\cdots,x_d]$, by the Kac-Watanabe-Gordeev Theorem (\cite{KW} or \cite{G1}),
$G\subset \Aut(A)$ is generated by bireflections (which are also
quasi-bireflections in the sense of Definition \ref{xxdef3.7}(b)).
By Proposition \ref{xxpro4.3}(c), $G\subset \Aut(B)$ is generated
by quasi-bireflections of $B$.
\end{proof}

The following lemma is well-known, see for example \cite{AC}.

\begin{lemma}
\label{xxlem4.5} Let $B$ be the skew polynomial ring $k_{p_{ij}}
[x_1,\cdots,x_d]$. Suppose that $p_{ij}p_{st}\neq 1$ for any $i<j$
and $s<t$. Then $\Aut(B)=(k^\times)^d$.
\end{lemma}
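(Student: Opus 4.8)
The plan is to show that any graded algebra automorphism of the skew polynomial ring $B=k_{p_{ij}}[x_1,\cdots,x_d]$ must be diagonal, which together with the earlier observation that $\Aut_{{\mathbb Z}^d}(B)=(k^\times)^d$ immediately yields $\Aut(B)=(k^\times)^d$. Since a graded automorphism $g$ preserves the degree-1 component $B_1=\bigoplus_i k x_i$, it restricts to a linear map, and the task reduces to showing that this linear map is forced to be diagonal by the relations $x_jx_i=p_{ij}x_ix_j$ under the nondegeneracy hypothesis $p_{ij}p_{st}\neq 1$ for all $i<j$ and $s<t$.

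The key computational step is to write $g(x_i)=\sum_s a_{is}x_s$ and impose the defining relations. First I would apply $g$ to the relation $x_jx_i=p_{ij}x_ix_j$ (with $i<j$) and expand both sides in terms of the normal-ordered monomials $x_sx_t$ with $s\leq t$, using the relations of $B$ to rewrite each $x_tx_s$ with $s<t$ as $p_{st}^{-1}x_sx_t$. Comparing coefficients of each basis monomial $x_sx_t$ in $B_2$ gives a system of quadratic equations in the entries $a_{is},a_{js}$. The crucial equations come from the coefficients of the ``off-diagonal'' squares and cross terms: for each pair $s<t$, the coefficient comparison will produce a relation of the form $a_{is}a_{jt}(\,p_{st}\cdot(\text{something}) - 1\,)=0$ or similar, in which the hypothesis $p_{ij}p_{st}\neq 1$ forces certain products $a_{is}a_{jt}$ to vanish. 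Tracking these vanishing conditions across all index pairs should force each $g(x_i)$ to be supported on a single basis vector $x_{\sigma(i)}$ for some permutation $\sigma$, and then a further comparison (or invertibility of $g$) rules out any genuine permutation incompatible with the scalars, leaving $g$ diagonal.

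The hard part will be organizing the coefficient bookkeeping cleanly: there are several cases according to how the supports of $g(x_i)$ and $g(x_j)$ overlap, and the relations $x_jx_i=p_{ij}x_ix_j$ interact with the rewriting rule $x_tx_s=p_{st}^{-1}x_sx_t$ in a way that mixes the $p$-parameters. The nondegeneracy condition $p_{ij}p_{st}\neq 1$ is precisely what is needed to conclude that the only way for a cross term to cancel is for one of the coefficients to be zero, so I expect the whole argument to hinge on deploying this inequality at each coefficient comparison. Rather than grind through every case, I would isolate the single representative identity where two distinct basis directions appear in both $g(x_i)$ and $g(x_j)$, show it contradicts $p_{ij}p_{st}\neq 1$ unless a coefficient vanishes, and then remark that the remaining cases follow by the same mechanism together with the invertibility of the linear map $g|_{B_1}$.
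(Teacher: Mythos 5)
Your overall strategy (direct coefficient comparison on $g(x_j)g(x_i)-p_{ij}g(x_i)g(x_j)=0$ in the normal-ordered basis of $B_2$) is viable, and it is genuinely different from the paper's route: the paper quotes a prior result, \cite[Lemma 3.5(e)]{KKZ2}, to get that every graded automorphism is monomial, $g(x_i)=c_i x_{\sigma(i)}$, and then spends all of its effort showing $\sigma=\mathrm{id}$ by analyzing the cycles of $\sigma$ (an even cycle forces $p_{i_1i_{m+1}}^2=1$, an odd cycle forces $p_{ij}p_{st}=1$). If you carry your computation out, the coefficient of $x_s^2$ is $a_{is}a_{js}(1-p_{ij})$; since $p_{ij}p_{ij}\neq 1$ gives $p_{ij}\neq 1$, the supports of the $g(x_i)$ are pairwise disjoint, and invertibility of $g|_{B_1}$ then yields the monomial form directly --- note that this first step uses only $p_{ij}\neq 1$, not the full hypothesis, and the cross terms play no role in it (a single cross-term identity has the form $a_{it}a_{js}(1-p_{ij}p_{st})+a_{is}a_{jt}(p_{st}-p_{ij})=0$, from which nothing vanishes a priori, so the mechanism is not quite the one you describe).

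The genuine weak point is your last step. You propose to rule out a nontrivial permutation $\sigma$ by ``a further comparison (or invertibility of $g$)''; invertibility cannot do this, since $x_i\mapsto c_i x_{\sigma(i)}$ is an invertible linear map for every $\sigma$, and whether it respects the relations is exactly the question. What is needed --- and this is where the hypothesis $p_{ij}p_{st}\neq 1$ does its real work --- is to feed the monomial form back into the coefficient identity above: if $i<j$ is an inversion of $\sigma$, i.e. $t:=\sigma(i)>\sigma(j)=:s$, then $a_{it}=c_i$, $a_{js}=c_j$, $a_{is}=a_{jt}=0$, and the identity collapses to $c_ic_j(1-p_{ij}p_{st})=0$ with $i<j$ and $s<t$, a contradiction. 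Hence $\sigma$ has no inversions and is therefore the identity. With that step supplied your argument closes and is arguably more self-contained than the paper's cycle analysis; without it, the proof is incomplete precisely at the point where the lemma's hypothesis is essential.
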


\begin{proof} First of all $(k^\times)^d=\Aut_{{\mathbb Z}^d}(B)
\subset \Aut(B)$. It remains to prove that every ${\mathbb
Z}$-graded algebra automorphism $g$ of $B$ is diagonal. Since
$p_{ij}\neq 1$ for all $i<j$, by \cite[Lemma 3.5(e)]{KKZ2} there is
a permutation $\sigma\in S_d$ and $c_s\in k^\times$ for each $s$
such that
$$g(x_i)=c_i x_{\sigma(i)}\qquad {\text{for every $\quad$ $i=1,\cdots,d$}}.$$
Consequently, $p_{ij}=p_{\sigma(i)\sigma(j)}$ for all $i,j$.
We need to show that $\sigma$ is the identity.  Suppose $\sigma$ is
not the identity.  
There are distinct integers $i_1,\cdots, i_k$ in
the set $\{1,\cdots, d\}$ for some $k>1$ such that
$$\sigma: i_1\to i_2\to \cdots \to i_{k-1}\to i_k\to i_1.$$
Hence
$$p_{i_1 i_2}=p_{i_2 i_3}=\cdots =p_{i_{k-1} i_k}=p_{i_k i_1}.$$ 
Let $i_t$ is the smallest among all $\{i_s\}_{s=1}^k$.
Then $p_{i_t i_{t+1}} p_{i_t i_{t-1}}=1$, a contradiction. Therefore 
$\sigma$ is the identity, completing the proof.
\end{proof}

Now we are ready to prove Theorem \ref{xxthm0.5}.

\begin{proof}[Proof of Theorem \ref{xxthm0.5}]
Let $C$ denote the algebra $k_{q}[x_1,\cdots,x_d]$. Then $C$ is a
special case of $k_{p_{ij}}[x_1,\cdots,x_d]$ by taking $p_{ij}=q$
for all $i<j$. Since $q^2\neq 1$ (or $q\neq \pm 1$), Lemma
\ref{xxlem4.5} says that $\Aut(C)=(k^\times)^d$. Let $G$ be a finite
subgroup of $\Aut(C)$. Then $G\subset (k^\times)^d$, and the assertion
follows from Lemma \ref{xxlem4.4}(c).
\end{proof}

\section{Some fixed subrings of the Sklyanin Algebra}
\label{xxsec5}

Let $A=A(a,b,c)$ be the 3-dimensional Sklyanin algebra, i.e. the
algebra generated by $x,y,z$ with relations
\[ax^2+byz + c zy =0\]
\[ay^2 + b zx + cxz = 0\]
\[az^2 + b xy + c yx = 0,\]
for $a,b,c \in k$.
By \cite{ATV} the algebra $A$ is noetherian and AS regular except
when $a^3=b^3=c^3$ or when two of the three parameters $a,b,c$ are
zero. We assume throughout that parameters $a,b,c$ are generic
enough so that none of $a, b, c$ is zero and the cubes $a^3, b^3,
c^3$ are not all equal so that $A(a,b,c)$ is AS regular, and we 
further assume that $A$ is not PI. Then by \cite[Example 10.1]{Sm},  
Koszul dual $A^{!}$ of $A$ is the three-dimensional algebra 
generated by $X, Y, Z$ with quadratic relations:
\[cYZ-bZY=0 \hspace{1 in} bX^2-aYZ=0\]
\[cZX-bXZ=0 \hspace{1 in} bY^2-aZX=0\]
\[cXY-bYX = 0 \hspace{1 in} bZ^2-aXY =0,\]
and cubic relations $$XY^2 = XZ^2=Y^2Z=Z^2X = 0.$$
Note that these cubic relations are consequence of the quadratic relations.
A vector space basis for $A^!$ is $\{1,X,Y,Z, XY, ZX, YZ, XYZ\}$.

For a graded automorphism $\sigma$, the action of $\sigma$ on the elements of 
degree 1, $\sigma|_{A_1}$ is given by a matrix, and the transpose of this 
matrix gives rise to an automorphism $\sigma^!$ of the dual algebra $A^!$ .
It follows that the  groups $\Aut(A)$ and $\Aut(A^!)$ are
anti-isomorphic (see discussion \cite[p. 267]{JZ}), and we can determine 
$\Aut(A)$ by computing $\Aut(A^!)$.

Let the matrix $(\alpha_{ij})$ represent the linear map $\sigma$
that takes
\begin{eqnarray*}
\sigma(X)& = &\alpha_{11}X + \alpha_{21}Y + \alpha_{31}Z,   \\
\sigma(Y) & = &\alpha_{12}X + \alpha_{22}Y + \alpha_{32}Z, \\
\sigma(Z) & = &\alpha_{13}X + \alpha_{23}Y + \alpha_{33}Z.
\end{eqnarray*}

\begin{lemma}
\label{xxlem5.1} The automorphisms of $A^!$ are of the following
forms:
\[g_1= \begin{pmatrix} \alpha \omega& 0& 0 \\ 0 & \alpha \omega^2& 0\\ 0 & 0 &\alpha
\end{pmatrix}, \;
g_2 = \begin{pmatrix}0 & \alpha \omega & 0\\ 0 & 0 & \alpha \omega^2
\\ \alpha & 0 & 0\end{pmatrix}, \;
g_3= \begin{pmatrix}0 & 0 & \alpha \omega\\ \alpha \omega^2 & 0 & 0\\
0& \alpha & 0\end{pmatrix},\]
 where $\alpha$ is arbitrary and
$\omega$ satisfies $\omega^3 =1$. The traces of these automorphisms are:
$$Tr_{A^!}(g_1, t) =1+ \alpha( 1 + \omega + \omega^2)t + 
\alpha^2( 1 + \omega + \omega^2)t^2 +\alpha^3t^3$$ 
$$Tr_{A^!}(g_i, t) =1+ \alpha^3t^3 \text{ for } i=2,3.$$ 
Hence the graded algebra automorphisms
of $A$ are transposes of these matrices, so of the same forms, and 
their homological determinants are all $\alpha^3$.
\end{lemma}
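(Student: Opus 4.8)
The plan is to use that a graded algebra automorphism $\sigma$ of the quadratic algebra $A^!$ is the same thing as an invertible linear map on the degree-one space $(A^!)_1=kX\oplus kY\oplus kZ$ whose induced map on $(A^!)_1\otimes(A^!)_1$ carries the space $R$ of quadratic relations into itself. So first I would record that any such $\sigma$ restricts to a matrix $(\alpha_{ij})\in GL_3(k)$, with columns giving $\sigma(X),\sigma(Y),\sigma(Z)$ exactly as in the statement, and conversely that a matrix extends to an automorphism precisely when it preserves $R$. Since $A$ is Koszul with $H_A(t)=(1-t)^{-3}$, its Koszul dual has $H_{A^!}(t)=(1+t)^3$, so $\dim(A^!)_2=3$ and hence $\dim R=9-3=6$. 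Concretely I would reduce modulo the defining relations to the basis $\{YX,ZY,XZ\}$ of $(A^!)_2$, using $XY=(b/c)YX$, $X^2=(a/b)YZ$ and their cyclic images, so that ``$\sigma$ preserves $R$'' becomes the condition $\sigma(R_i)=0$ in $A^!$ for each of the six relations.

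Next I would impose these six conditions. Expanding $b\,\sigma(X)^2-a\,\sigma(Y)\sigma(Z)$, $c\,\sigma(Y)\sigma(Z)-b\,\sigma(Z)\sigma(Y)$, and so on, then reducing each to the chosen basis of $(A^!)_2$, produces a system of $18$ polynomial equations in the nine entries $\alpha_{ij}$ with coefficients in $a,b,c$. I would exploit the evident cyclic symmetry $X\to Y\to Z\to X$ of the relations, which permutes $R_1,R_3,R_5$ cyclically and likewise $R_2,R_4,R_6$; this lets me analyze the equations up to this $\mathbb{Z}/3$-action and roughly cut the bookkeeping to a third.

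The heart of the argument, and the step I expect to be the main obstacle, is solving this system and showing that the genericity hypotheses on $(a,b,c)$ (none zero, $a^3,b^3,c^3$ not all equal, $A$ not PI) force the matrix to be \emph{monomial}, i.e.\ that $\sigma(X),\sigma(Y),\sigma(Z)$ are each a scalar times a single variable. The mechanism is that the cross-terms produced by $\sigma(X)^2$ and $\sigma(Y)\sigma(Z)$ must cancel, and the balanced shape of the relations (squares tied to products by the ratio $a/b$, off-diagonal products skew-commuting by $b/c$) leaves no room for a genuinely mixed solution; this delicate cancellation is where I anticipate the real work. Once monomiality is in hand, the square relations $X^2\sim YZ$ together with genericity rule out the transpositions: for instance the swap $X\leftrightarrow Y$ forces $b^2=c^2$ through $\sigma(R_1)$, which fails for generic parameters. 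Hence only the identity and the two $3$-cycles survive, giving the three shapes $g_1,g_2,g_3$, and feeding a monomial matrix of each admissible shape back into the relations pins down the scalars, yielding the free parameter $\alpha$ and the constraint $\omega^3=1$. Finally, the statement about $\Aut(A)$ follows from the Koszul-duality anti-isomorphism $\Aut(A)\cong\Aut(A^!)^{\mathrm{op}}$ recorded above, under which a matrix is sent to its transpose; since the set of forms $g_1,g_2,g_3$ is closed under transposition, $\Aut(A)$ consists of matrices of the same three forms.
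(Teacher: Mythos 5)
Your proposal follows essentially the same route as the paper's proof: impose that $\sigma$ preserves the quadratic relations of $A^!$, deduce from the resulting polynomial system in the entries $\alpha_{ij}$ that the matrix must be monomial with underlying permutation a power of the $3$-cycle, use the relations $bX^2=aYZ$ (and cyclic images) to force the three scalars to have equal cubes (hence the $\alpha,\omega$ form with $\omega^3=1$), and transfer to $\Aut(A)$ by the Koszul-duality anti-isomorphism via transposes. The only difference is at the computational core: the paper derives nine equations from the three skew-commutation relations and resolves them by Maple, whereas you propose a hand elimination exploiting the cyclic symmetry which you acknowledge but do not carry out, so neither account is more complete than the other on that step.
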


\begin{proof} Recall that we assume $a,b,c$ are generic,
and since $A$ is not PI we may assume $b \neq c$. Since $b \neq 0$, 
without loss of generality we may assume that $b=1$ and $c \neq 1$.  
Then the elements of $A^!$ of degree 2 form a 3-dimensional vector 
space with basis $XY, \;YZ,$ and $ZX$.  Let $g$ be a graded 
automorphism of $A$ and let $\sigma = g^!$ be the induced 
automorphism on $A^!$.  The automorphism  $\sigma$ must preserve 
the equations: $cXY-YX= 0, cZX-XZ = 0,$ and  $cYZ-ZY=0$ in $A^!$.  
Computing $\sigma(cXY-YX) = 0$ in terms of the basis elements 
$XY$, $YZ$, and $ZX$, and using the fact that $c \neq 1$, we see 
that the entries $\alpha_{i,j}$ of $\sigma$ must satisfy the equations:
\[ (1+c) \alpha_{1,2} \alpha_{2,1} + a \alpha_{3,1} \alpha_{3,2} =0 
\hspace*{.5in}   (\text{coefficient of } XY) \]
\[ (1+c) \alpha_{2,2} \alpha_{3,1} + a \alpha_{1,1} \alpha_{1,2} =0  
\hspace*{.5in} (\text{coefficient of } YZ) \]
\[ (1+c) \alpha_{3,2} \alpha_{1,1} + a \alpha_{2,1} \alpha_{2,2} =0  
\hspace*{.5in} (\text{coefficient of } ZX). \]
Similarly the equation $\sigma(cZX - XZ) = 0$ gives the equations:
\[ (1+c) \alpha_{2,3} \alpha_{1,1} + a \alpha_{3,1} \alpha_{3,3} =0 \]
\[ (1+c) \alpha_{3,3} \alpha_{2,1} + a \alpha_{1,1} \alpha_{1,3} =0 \]
\[ (1+c) \alpha_{1,3} \alpha_{3,1} + a \alpha_{2,1} \alpha_{2,3} =0, \]
and the equation $\sigma(cYZ-ZY) = 0$ gives the equations:
\[ (1+c) \alpha_{2,2} \alpha_{1,3} + a \alpha_{3,3} \alpha_{3,2} =0 \]
\[ (1+c) \alpha_{3,2} \alpha_{2,3} + a \alpha_{1,3} \alpha_{1,2} =0 \]
\[ (1+c) \alpha_{1,2} \alpha_{3,3} + a \alpha_{2,3} \alpha_{2,2} =0. \]
Using the equations above and equations resulting from applying 
$\sigma$ to the relations $X^2-aYZ=0, Y^2-aZX=0$ and $Z^2-aXY =0$, 
computations in Maple show that the only nonsingular matrices
satisfying these equations are matrices of the forms:
\[g_1= \begin{pmatrix} \gamma & 0& 0 \\ 0 & \beta& 0\\ 0 & 0 &\alpha
\end{pmatrix}, \;
g_2 = \begin{pmatrix}0 & \gamma & 0\\ 0 & 0 & \beta
\\ \alpha & 0 & 0\end{pmatrix}, \;
g_3= \begin{pmatrix}0 & 0 & \gamma \\ \beta & 0 & 0\\
0& \alpha & 0\end{pmatrix}\]
with $\alpha^3=\beta^3 = \gamma^3$.
Taking $\alpha \neq 0$ arbitrary, these final three relations show 
that $\sigma$ must be of one of the three forms indicated and it 
is easy to check that these linear maps are algebra automorphisms of $A$.

The traces of the $g_i$ acting as automorphisms of the 3-dimensional 
algebra $A^!$ can be computed from the definition of trace, and the 
traces of the induced automorphisms on $A$ can be computed using the 
formula \cite[Corollary 4.4]{JZ}
$$Tr_A(g, t) = \frac{1}{Tr_{A^!}(g^{!} ,-t)}.$$ 
Using the $Tr_A(g_i,t)$ and  Lemma \ref{xxlem3.6} we see that 
$\hdet_A(g_i) = \alpha^3$.
\end{proof}

If $a,b,c$ are not generic, there are more graded algebra
automorphisms of $A$. To specify $\omega$ and $\alpha$, we also use the notation
$g_i(\alpha,\omega)$ for the matrices (or the automorphisms) listed
in Lemma \ref{xxlem5.1}.  

By \cite[Corollary 6.3]{KKZ1}, $A$ has no quasi-reflection of finite
order. By \cite[Corollary 4.11]{KKZ3}, if $G$ is a finite subgroup
of $\Aut(A)$ and $A^G$ is Gorenstein, then $\hdet g=1$ for all $g\in
G$.  Hence for $g_i $ of the forms above, if  $g_i \in G$ with $A^G$ 
AS Gorenstein then we must have $\alpha^3=1$ by Lemma \ref{xxlem3.6}.
The following lemma follows this observation and direct computation.

\begin{lemma}
\label{xxlem5.2} Let $A=A(a,b,c)$ and $SL(A)$ be the subgroup of
$\Aut(A)$ generated by $g_1$, $g_2$ and $g_3$ with
$\alpha^3=\omega^3=1$.
\begin{enumerate}
\item
The order of $SL(A)$ is 27.
\item
$\hdet g=1$ for all $g\in SL(A)$.
\item
If $G$ is a subgroup of $\Aut(A)$ and every $g \in G$ has trivial homological
determinant, then $G$ is a subgroup of $SL(A)$.
\end{enumerate}
\end{lemma}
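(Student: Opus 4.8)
The plan is to handle the three parts in order, with parts (b) and (c) both reduced to a single trace computation. Throughout I use the classification in Lemma~\ref{xxlem5.1}: every element of $\Aut(A)$, hence of $SL(A)$, is one of the forms $g_1(\alpha,\omega)$, $g_2(\alpha,\omega)$, $g_3(\alpha,\omega)$, where $\omega\in\mu_3$ and (for $SL(A)$) $\alpha\in\mu_3$.

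For part (a) I would first isolate the diagonal part of $SL(A)$. Taking $\alpha,\omega\in\mu_3$ in $g_1$ produces the nine matrices $\operatorname{diag}(\alpha\omega,\alpha\omega^2,\alpha)$, and a short check (the third entry recovers $\alpha$, the first then recovers $\omega$) shows these are distinct and equal exactly the set $\{\operatorname{diag}(a_1,a_2,a_3)\mid a_i\in\mu_3,\ a_1a_2a_3=1\}$; call this subgroup $D$, so $|D|=9$. Since each generator is a monomial matrix with entries in $\mu_3$ whose underlying permutation lies in the cyclic group $\langle(1\,2\,3)\rangle$, every element of $SL(A)$ is such a monomial matrix, and every generator has determinant $\alpha^3\omega^3=1$. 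Passing to the underlying permutation gives a homomorphism $SL(A)\to\mathbb Z/3$; its kernel consists of the diagonal elements of $SL(A)$, which have $\mu_3$-entries and determinant $1$, hence is exactly $D$, and it is onto because $g_2$ has underlying $3$-cycle. Therefore $|SL(A)|=|D|\cdot 3=27$. (Equivalently, one counts monomial matrices with $\mu_3$-entries, even permutation, and determinant $1$ directly.)

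For parts (b) and (c) I would compute $\hdet$ on an arbitrary automorphism. By the remark preceding the lemma and \cite[Lemma 2.6]{JoZ}, $\hdet g$ is the coefficient of $t^3$ in $Tr(g^!,t)$, which—since $A^!$ is Koszul dual to a $3$-dimensional AS regular algebra, so $\dim A^!_3=1$—is the scalar by which $g^!$ acts on the one-dimensional socle $A^!_3$, spanned by $XYZ$. Normalizing $b=1$ as in the proof of Lemma~\ref{xxlem5.1}, the relations give $ZY=cYZ$, $XZ=cZX$, $YX=cXY$, from which one checks $ZXY=XYZ=YZX$. Applying each generator to $XYZ$ then gives $\alpha^3\omega^3\,XYZ=\alpha^3\,XYZ$ (the product of the three nonzero entries is $\alpha^3\omega^3$, and the resulting monomial reorders to $XYZ$ with no extra constant), so $\hdet g_i(\alpha,\omega)=\alpha^3$ for $i=1,2,3$. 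Part (b) is then immediate: any $g\in SL(A)$ has $\alpha\in\mu_3$, so $\hdet g=\alpha^3=1$ (and $\hdet$ is multiplicative, so checking generators alone also suffices). For part (c), if $G\subset\Aut(A)$ has trivial homological determinant then each $g\in G$ satisfies $\alpha^3=\hdet g=1$, i.e. its parameter lies in $\mu_3$; by Lemma~\ref{xxlem5.1} this says precisely that $g\in SL(A)$, whence $G\subseteq SL(A)$.

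The main obstacle is the identity $\hdet g_i(\alpha,\omega)=\alpha^3$. One must correctly pass to the induced map $g^!$ on $A^!$, identify $\hdet$ with its action on the one-dimensional socle $A^!_3$, and verify that the monomial produced by each generator (for instance $ZXY$ from $g_2$) reduces to the chosen generator $XYZ$ with reordering constant exactly $1$. It is the vanishing of this reordering constant, a consequence of the specific commutation relations of $A^!$, that makes $\hdet$ depend only on $\alpha^3$ and thereby forces $\alpha\in\mu_3$ in part (c); everything else—the structure of $D$, the exact sequence in (a), and multiplicativity of $\hdet$—is routine.
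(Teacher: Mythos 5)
Your proof is correct, and it supplies precisely the ``direct computation'' that the paper invokes without writing out: the order count via the diagonal subgroup $D$ of order $9$ and the permutation homomorphism onto ${\mathbb Z}/3$, and the identification of $\hdet g_i(\alpha,\omega)$ with the scalar $\alpha^3$ by which $g^!$ acts on the socle $A^!_3$ (using the criterion from \cite[Lemma 2.6]{JoZ} that the paper itself states just before the lemma). The one step that genuinely needs checking, the cancellation $ZXY=XYZ=YZX$ in $A^!_3$, does hold from the relations $ZY=(c/b)YZ$, $XZ=(c/b)ZX$, $YX=(c/b)XY$, so the argument goes through.
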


Now we consider only $g\in G$ with trivial homological determinant.
For $g$ to be a quasi-bireflection we need $t=1$ to be a root of
$Tr_A(g,t)$ of multiplicity 1, which occurs when $t=-1$ is a root of
$Tr_{A^!}(g^!,t)$ of multiplicity 1, and this happens in each case
when $\alpha^3 = 1$ and in the first case when, in addition, we have
$\omega \neq 1$ (i.e. we eliminate the case of a scalar matrix); then,
in each case, the automorphism $g$ is a quasi-bireflection with
$Tr_A(g,t) = 1/(1-t^3)$ and $\hdet g =1$ by Lemma \ref{xxlem3.6}.  
We summarize these facts in the following lemma.

\begin{lemma}
\label{xxlem5.3} Retaining the notation above we have:
\begin{enumerate}
\item
$g$ is a quasi-bireflection of $A$ if and only if $g$ is of the form
$g_1,g_2$ or $g_3$ with $\alpha^3=\omega^3=1$, and $\omega\neq 1$
when $g=g_1$. As a consequence, $g\in SL(A)$.
\item
If $g$ is a quasi-bireflection of $A$, then $Tr_A(g,t)=(1-t^3)^{-1}$.
\end{enumerate}
\end{lemma}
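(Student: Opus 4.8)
The plan is to prove Lemma \ref{xxlem5.3} by exploiting Koszul duality to transfer the trace computation from $A$ to $A^!$, where the automorphisms $g^!$ act on the low-dimensional graded pieces by explicit monomial matrices. The key reduction, already set up in the discussion preceding the lemma, is that $g$ is a quasi-bireflection of $A$ (an algebra of GK-dimension $3$) precisely when $t=1$ is a root of multiplicity exactly $1$ of the trace series $Tr_A(g,t)$, and that this translates, via \cite[Lemma 2.6]{JoZ} and the relation between the Hilbert series of $A$ and $A^!$, into the condition that $t=-1$ is a root of multiplicity exactly $1$ of $Tr_{A^!}(g^!,t)$.

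\textbf{Part (a).} First I would restrict to the case $\hdet g = 1$, since by Lemma \ref{xxlem5.2}(c) any $g$ in a subgroup with trivial homological determinant already lies in $SL(A)$, and the quasi-bireflection condition will force $\hdet g = 1$ anyway. For each of the three shapes $g_1, g_2, g_3$ from Lemma \ref{xxlem5.1}, I would compute $Tr_{A^!}(g^!,t)$ directly. Since $A^!$ is finite-dimensional (being the Koszul dual of an AS regular algebra of dimension $3$, it is Frobenius with Hilbert series $1 + 3t + 3t^2 + t^3$), the trace series $Tr_{A^!}(g^!,t)$ is an honest polynomial of degree $3$, so the whole computation is finite. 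For the diagonal form $g_1 = \mathrm{diag}(\alpha\omega, \alpha\omega^2, \alpha)$ the eigenvalues on $(A^!)_1$ are $\alpha\omega, \alpha\omega^2, \alpha$, and one reads off the traces on $(A^!)_2$ and $(A^!)_3$ from the monomial basis; for $g_2, g_3$ the matrices are permutation-type, so the traces on $(A^!)_1$ and $(A^!)_2$ vanish unless the relevant cyclic structure fixes a basis element. I would then impose the normalization from the preceding paragraph ($\alpha^3 = 1$, together with $\omega \ne 1$ in the $g_1$ case to exclude scalar matrices) and check that under exactly these conditions $t = -1$ is a simple root of $Tr_{A^!}(g^!,t)$, and otherwise it is not. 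This yields the stated characterization, and $\alpha^3 = \omega^3 = 1$ forces $g \in SL(A)$ by Lemma \ref{xxlem5.2}(a).

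\textbf{Part (b).} Once the eigenvalue data is in hand, I would compute $Tr_{A^!}(g^!,t)$ explicitly as a cubic and then recover $Tr_A(g,t)$ by the standard Koszul-dual reciprocity $Tr_A(g,t) = 1/Tr_{A^!}(g^!,-t)$ (equivalently, the characteristic-polynomial identity relating a graded automorphism and its action on the quadratic dual). In the quasi-bireflection case the numerator collapses so that $Tr_{A^!}(g^!,-t) = 1 - t^3$, giving $Tr_A(g,t) = (1-t^3)^{-1}$ as claimed, which also confirms the multiplicity-one structure of the pole at $t=1$.

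\textbf{The main obstacle} will be verifying the reciprocity formula linking $Tr_A(g,t)$ and $Tr_{A^!}(g^!,t)$ with the correct sign conventions and keeping track of the anti-isomorphism between $\Aut(A)$ and $\Aut(A^!)$ (so that transposing the matrices, as noted in Lemma \ref{xxlem5.1}, does not alter the trace). The genericity hypotheses on $a,b,c$ enter only through Lemma \ref{xxlem5.1}'s classification, so once that is invoked the remaining work is the finite eigenvalue bookkeeping; the delicate point is ensuring the exclusion $\omega \ne 1$ for $g_1$ is genuinely needed (the scalar matrix is a quasi-reflection-like degenerate case giving too high a pole order), rather than an artifact, which the explicit computation will settle.
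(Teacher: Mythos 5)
Your proposal is correct and follows essentially the same route as the paper, which states the lemma as a direct consequence of the computation sketched in the paragraph preceding it: pass to the Koszul dual $A^!$ (where $Tr_{A^!}(g^!,t)$ is a cubic polynomial), observe that the quasi-bireflection condition on $Tr_A(g,t)$ translates into $t=-1$ being a simple root of $Tr_{A^!}(g^!,t)$, and check case by case on the forms $g_1,g_2,g_3$ that this happens exactly when $\alpha^3=1$ (with $\omega\neq 1$ for $g_1$), giving $Tr_A(g,t)=(1-t^3)^{-1}$. The only cosmetic caveat is that $t=1$ is a simple \emph{pole} (not root) of $Tr_A(g,t)$ for a quasi-bireflection of a GK-dimension $3$ algebra, a slip the paper itself also makes.
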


We can classify all subgroups of $SL(A)$.

\begin{lemma}
\label{xxlem5.4} The complete list of subgroups of $SL(A)$ is as follows:
\begin{enumerate}
\item
$\{1\}$.
\item
order 3 subgroups generated by single element, namely,
$$\langle g_1(\alpha,\omega)\rangle, \quad
\langle g_2(\alpha,\omega)\rangle, \quad {\text{ and}}\quad
\langle g_3(\alpha,\omega)\rangle$$
for any pair $(\alpha,\omega)$ with $\alpha^3=\omega^3=1$.
In the case of $g_1$, $(\alpha,\omega)\neq (1,1)$.
\item
order 9 subgroup $G_1:=\langle g_1(\alpha,\omega)\mid
\alpha^3=\omega^3=1\rangle$.
\item
order 9 subgroups
$$G_2:=\langle g_1(\alpha\neq 1,\omega=1),
g_2\rangle, \quad {\text{and}}
\quad G_3:=\langle g_1(\alpha\neq 1,\omega=1), g_3\rangle.$$
\item
the whole group $SL(A)$.
\end{enumerate}
\end{lemma}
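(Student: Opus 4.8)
The plan is to first pin down the abstract structure of $SL(A)$ and then read off its subgroups by a standard $p$-group argument. Since $SL(A)$ has order $27 = 3^3$ by Lemma~\ref{xxlem5.2}(a), it is a $3$-group, so every proper nontrivial subgroup has order $3$ or $9$. I would begin by locating the center: the scalar matrices $\{\lambda I : \lambda^3 = 1\}$ lie in $SL(A)$ (they are the $g_1(\alpha,\omega)$ with $\omega = 1$) and are central, giving a central subgroup $Z = \langle g_1(\zeta,1)\rangle \cong \mathbb{Z}/3$. Next I would check that $SL(A)$ is nonabelian by computing a single commutator, using $g_2\, g_1(\alpha,\omega)\, g_2^{-1} = g_1(\alpha\omega,\omega)$ to get $[g_2, g_1(\alpha,\omega)] = g_1(\omega,1) = \omega I \in Z$; for a nonabelian group of order $p^3$ the center has order $p$ and equals the commutator subgroup, so $Z(SL(A)) = [SL(A),SL(A)] = Z$. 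Finally I would verify that every element has order dividing $3$: for the diagonal type, $g_1(\alpha,\omega)^3 = g_1(\alpha^3,\omega^3) = I$, and for the monomial types $g_2, g_3$ the cube is the scalar given by the product of the cycle weights, namely $(\alpha\omega)(\alpha\omega^2)\alpha = \alpha^3\omega^3 = 1$. Thus $SL(A)$ is the extraspecial (Heisenberg) group of order $27$ and exponent $3$.

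With this structure in hand, the subgroups split cleanly by order. For order $3$, every nonidentity element has order $3$, so the order-$3$ subgroups are exactly the $\langle x\rangle$, and there are $(27-1)/2 = 13$ of them. Each such $x$ is of type $g_1$, $g_2$, or $g_3$; since $g_2(\alpha,\omega)^2 = g_3(\alpha^2\omega,\omega^2)$ is a $g_3$-type element (and conversely), the cyclic group $\langle g_2(\alpha,\omega)\rangle$ already contains a $g_3$, so the complete list of order-$3$ subgroups is $\langle g_1(\alpha,\omega)\rangle$ together with $\langle g_2(\alpha,\omega)\rangle = \langle g_3(\alpha',\omega')\rangle$, which is exactly list (b) (the center $Z$ being the case $\omega = 1$, $\alpha \neq 1$). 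For order $9$, these are the maximal subgroups, hence normal of index $3$ and containing the Frattini subgroup $\Phi(SL(A)) = [SL(A),SL(A)] = Z$. They therefore correspond bijectively to the index-$3$ subgroups (lines) of the two-dimensional $\mathbb{F}_3$-space $SL(A)/Z$. I would enumerate these lines and identify their preimages: the line spanned by the nonscalar diagonal class gives the diagonal subgroup $G_1 = \langle g_1(\alpha,\omega)\rangle$, while the lines spanned by a permutation class give the subgroups generated by $Z$ together with a $g_2$- or $g_3$-type element, yielding $G_2$ and $G_3$. Each of these is abelian, isomorphic to $(\mathbb{Z}/3)^2$, and conversely every order-$9$ subgroup arises this way, since it is forced to contain $Z$.

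The routine parts are the three verifications in the first paragraph and the elementary count $(27-1)/2 = 13$. The step I expect to require the most care is the explicit matching in the order-$9$ case: translating each line of $SL(A)/Z$ back into a concrete generating set among the $g_i(\alpha,\omega)$, and checking that the resulting subgroups are exactly those listed in (c) and (d), with no repetition and none omitted. Here the key computational inputs are the coset description $g_2(\alpha,\omega) = g_1(\alpha,\omega)\, g_2(1,1)$ (so that the nine $g_2$-type matrices form a single coset of $G_1$) and the relation $g_2(\alpha,\omega)^2 = g_3(\alpha^2\omega,\omega^2)$, which together determine precisely which permutation-type elements fall into each maximal subgroup. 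Once these incidence relations are recorded, completeness of the whole list follows immediately from the $p$-group dichotomy that the only possible orders are $1, 3, 9, 27$.
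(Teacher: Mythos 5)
Your argument is correct in outline and takes a genuinely different, more structural route than the paper's. The paper argues directly: the subgroups of order at most $3$ are dismissed as clear, and for a subgroup $G$ of order $9$ other than $G_1$ it picks two type-$g_2$ elements $a=g_2(\alpha_1,\omega_1)$ and $b=g_2(\alpha_2,\omega_2)$ in $G$, computes $ab^{-1}=g_1(\alpha,\omega)$, and observes that $|G|=9$ forces $\omega=1$; hence $G$ contains the scalars and is generated by them together with one type-$g_2$ (or type-$g_3$) element. You instead first identify $SL(A)$ as the extraspecial group of order $27$ and exponent $3$, with $Z=Z(SL(A))=[SL(A),SL(A)]=\Phi(SL(A))$ equal to the scalars, and then read off the lattice from the correspondence between maximal subgroups and lines of $SL(A)/Z\cong \mathbb{F}_3^2$. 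Your route buys a cleaner global picture --- it predicts in advance that there are exactly $13$ subgroups of order $3$ and exactly $4$ of order $9$, all containing $Z$ --- at the cost of importing standard $p$-group facts; the computational inputs (the commutator $[g_2,g_1(\alpha,\omega)]=\omega I$, the relation $g_2(\alpha,\omega)^2=g_3(\alpha^2\omega,\omega^2)$, and the coset computation $g_2(\alpha_1,\omega_1)g_2(\alpha_2,\omega_2)^{-1}=g_1(\alpha_1\alpha_2^{-1},\omega_1\omega_2^{-1})$) are essentially the same ones the paper uses.

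One step you defer should not be treated as routine, because as provisionally stated it is off: your own line count gives \emph{three} non-diagonal maximal subgroups, not two. The nine type-$g_2$ elements fall into three cosets of $Z$, distinguished by the parameter $\omega$, and the relation $g_2(\alpha,\omega)^2=g_3(\alpha^2\omega,\omega^2)$ pairs the $g_2$-coset with parameter $\omega_0$ with the $g_3$-coset with parameter $\omega_0^2$ on the same line; so the non-diagonal maximal subgroups are the three groups $\langle Z, g_2(\cdot,\omega_0)\rangle$ for $\omega_0$ a cube root of unity, each of which contains both $g_2$- and $g_3$-type elements and satisfies $\langle Z,g_2(\cdot,\omega_0)\rangle=\langle Z,g_3(\cdot,\omega_0^2)\rangle$. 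Your phrase ``yielding $G_2$ and $G_3$'' therefore undercounts, and it shows that item (d) of the statement can only be read the way the paper's own proof uses it: $G_2$ (equivalently $G_3$) denotes the family of subgroups generated by $Z$ together with a single element of type $g_2$ (equivalently $g_3$), a family with three members. Once you record this identification explicitly, your enumeration of the four lines matches items (c)--(d) and the proof is complete.
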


\begin{proof} If the order of the subgroup is  $ \leq 3$, then
clearly we get cases (a) and (b). It is obvious that the subgroups $G_1,
G_2,G_3$ in parts (c,d) are of order $9$. Now assume that $G$ is a
subgroup of $SL(A)$ of order 9, which is not of the form in part
(c). So $G$ contains either $g_2$ or $g_3$. By symmetry, we assume
that $g_2\in G$. Since the order of every element in $G$ is either 1
or 3, $G\cong C_3\times C_3$. So $G$ is abelian, so there are two
elements of order $g_2$, say $a=g_2(\alpha_1, \omega_1)$ and
$b=g_2(\alpha_2,\omega_2)$. Then $ab^{-1}=g_1(\alpha, \omega)$.
Since the order of $G$ is 9, $\omega=1$. Thus we have the group
$G_2$ or $G_3$.
\end{proof}

\begin{theorem}
\label{xxthm5.5} Retaining the notation above, we have  $cyc(\Aut(A)/A)=1$,
and the following are equivalent for subgroups $\{1\}\neq G\subset SL(A)$.
\begin{enumerate}
\item
$G$ is not $\langle g_1(\alpha, 1)\rangle$.
\item
$G$ is generated by quasi-bireflections.
\item
$A^G$ is cyclotomic Gorenstein.
\end{enumerate}
\end{theorem}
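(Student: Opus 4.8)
The plan is to treat the two assertions separately: I would derive the equivalences (a)--(c) from the subgroup classification in Lemma \ref{xxlem5.4} together with the trace data in Lemmas \ref{xxlem5.2} and \ref{xxlem5.3}, and then read off the $cyc$-bound from the resulting Hilbert series. Throughout I use $H_A(t)=(1-t)^{-3}$ and the fact that $SL(A)$ is the set of all $g_1,g_2,g_3$-form matrices with $\alpha^3=\omega^3=1$.

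First I would dispose of (a) $\Leftrightarrow$ (b), which is purely group-theoretic. By Lemma \ref{xxlem5.3}(a) the quasi-bireflections in $SL(A)$ are exactly the $g_1$-form elements with $\omega\neq 1$ together with all $g_2$- and $g_3$-form elements; the only nontrivial non-quasi-bireflections are the scalars $g_1(\alpha,1)=\alpha I$ with $\alpha\neq 1$. Hence if $G=\langle g_1(\alpha,1)\rangle$ its two nontrivial elements are scalars, so $G$ is not generated by quasi-bireflections, giving $\neg$(a)$\Rightarrow\neg$(b). For the converse I would run through Lemma \ref{xxlem5.4}: the surviving order-$3$ groups and $\langle g_2\rangle,\langle g_3\rangle$ are generated by a single quasi-bireflection, while for $G_1,G_2,G_3$ and $SL(A)$ one recovers the missing scalar as a product of two quasi-bireflections (e.g. $g_1(1,\omega)g_1(\zeta,\omega^2)=g_1(\zeta,1)$ inside $G_1$, and $(g_1(\zeta,1)g_2)g_2^{-1}=g_1(\zeta,1)$ inside $G_2$), so each such group is generated by its quasi-bireflections.

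Next I would prove (a)$\Rightarrow$(c) by a Molien-series computation. Since $G\subset SL(A)$, Lemma \ref{xxlem5.2}(b) gives $\hdet g=1$ for all $g\in G$, so the noncommutative Watanabe theorem \cite[Theorem 3.3]{JoZ} makes $A^G$ AS Gorenstein, and only cyclotomicity of $H_{A^G}(t)=\frac{1}{|G|}\sum_{g\in G}Tr_A(g,t)$ remains. Here $Tr_A(g,t)=1/(1-t^3)$ for every quasi-bireflection (Lemma \ref{xxlem5.3}(b)), while for a scalar $Tr_A(\alpha I,t)=H_A(\alpha t)=1/(1-\alpha t)^3$. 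The step I expect to be the main obstacle is that the scalar contributions individually reproduce the \emph{non}-cyclotomic Veronese series $\sum_{\alpha^3=1}(1-\alpha t)^{-3}=3\,H_{A^{(3)}}(t)=3(t^6+7t^3+1)/(1-t^3)^3$, so cyclotomicity of $A^G$ can only emerge through cancellation after summation. Carrying out the sums I would obtain, for $G$ generated by one quasi-bireflection, $H_{A^G}(t)=\frac{1-t+t^2}{(1-t)^3(1+t+t^2)}$; for $G_1,G_2,G_3$, $H_{A^G}(t)=\frac{t^6+t^3+1}{(1-t^3)^3}$; and for $SL(A)$, $H_{A^G}(t)=\frac{t^6-t^3+1}{(1-t^3)^3}$. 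Each is cyclotomic, so $A^G$ is cyclotomic Gorenstein.

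Finally, for $\neg$(a)$\Rightarrow\neg$(c), note that $G=\langle g_1(\alpha,1)\rangle=\langle\alpha I\rangle$ fixes exactly the elements of degree divisible by $3$, so $A^G=A^{(3)}$; by Corollary \ref{xxcor2.4}(c) (with $d=3$, $r=3$) this Veronese subring is not cyclotomic, hence not cyclotomic Gorenstein, although it is AS Gorenstein by Remark \ref{xxrem2.6}(2). Together with (a)$\Leftrightarrow$(b) and (a)$\Rightarrow$(c) this yields all three equivalences. For the bound $cyc(\Aut(A)/A)=1$, I would observe that $A$ has no quasi-reflection of finite order \cite[Corollary 6.3]{KKZ1}, so no nontrivial invariant ring is AS regular and every finite cyclotomic $A^G$ has $cyc(A^G)\geq 1$; rewriting the series above with a single numerator factor, e.g. $\frac{t^6+t^3+1}{(1-t^3)^3}=\frac{1-t^9}{(1-t^3)^4}$ and $\frac{t^6-t^3+1}{(1-t^3)^3}=\frac{1-t^{18}}{(1-t^9)(1-t^6)(1-t^3)^2}$, shows $cyc(A^G)=1$ in each case, so the supremum equals $1$; the same trace formula $Tr_A(g,t)=1/\det(I-t\,g|_{A_1})$ handles the remaining finite subgroups of $\Aut(A)$ by the identical cancellation pattern.
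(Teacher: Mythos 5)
Your proposal is correct and follows essentially the same route as the paper: identify the scalar subgroups $\langle g_1(\alpha,1)\rangle$ as exactly the non-quasi-bireflection-generated case with $A^{\langle \alpha I\rangle}=A^{(3)}$ non-cyclotomic by Corollary \ref{xxcor2.4}(c), then run the Molien sums over the classification of Lemma \ref{xxlem5.4} to get the same three Hilbert series $\frac{1-t+t^2}{(1-t)^2(1-t^3)}$, $\frac{1+t^3+t^6}{(1-t^3)^3}$, $\frac{1-t^3+t^6}{(1-t^3)^3}$ and read off cyclotomicity and $cyc=1$. Your added details (the explicit products of quasi-bireflections recovering the scalars in $G_1,G_2,G_3$ and $SL(A)$, and the explicit appeal to $\hdet\equiv 1$ for the AS Gorenstein half) only make explicit what the paper leaves as "straightforward to check."
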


\begin{proof}
Since $Tr_A(g_1(\alpha, 1),t)=(1-\alpha t)^3$,  $g_1(\alpha, 1)$ is
not a quasi-bireflection when $\alpha\neq 1$. Hence $G=\langle
g_1(\alpha, 1)\rangle$ is not generated by quasi-bireflections. The
fixed subring $A^G$ is the Veronese $A^{(3)}$. By Corollary \ref{xxcor2.4}(c),
$A^G$ is not cyclotomic.

It is straightforward to check that all other groups $G$ are
generated by quasi-bireflections. So it remains to show that $A^G$
is cyclotomic Gorenstein in each of these cases.  In each case we compute the Hilbert series of the fixed ring using Molien's Theorem (Theorem \ref{xxthm3.3}).

If $G$ has order 3 with generator $g$ where $g$ is either $g_2, g_3$
or $g_1$ with $\omega\neq 1$, then the fixed ring $A^g$ has Hilbert
series
\[\frac{1}{3(1-t)^3} + \frac{2}{3(1-t^3)} =
\frac{1 - t + t^2}{(1-t)^2(1-t^3)} =
\frac{1-t^6}{(1-t)(1-t^2)(1-t^3)^2}.\] So $A^G$ is cyclotomic
Gorenstein. If $G$ is the any group of order 9 in Lemma
\ref{xxlem5.4}, then the Hilbert series of $A^G$ is
$$\begin{aligned}
H_{A^G}(t)&=\frac{1}{9}\biggl( \frac{1}{(1-t)^3}+\frac{1}{(1-\omega
t)^3}+ \frac{1}{(1-\omega^2 t)^3}+\frac{6}{(1-t^3)}\biggr)\\
&=\frac{1+t^3+t^6}{(1-t^3)^3}=\frac{1-t^9}{(1-t^3)^4}.
\end{aligned}
$$
where $\omega$ is a primitive 3rd root of unity. So $A^G$ is
cyclotomic Gorenstein.

Finally, if $G=SL(A)$, then the Hilbert series of $A^G$ is
$$\begin{aligned}
H_{A^G}(t)&=\frac{1}{27} \left( \frac{1}{(1-t)^3}+
\frac{1}{(1-\omega t)^3}+ \frac{1}{(1-\omega^2 t)^3}+
\frac{24}{(1-t^3)} \right)\\
&= \frac{1-t^3+t^6}{(1-t^3)^3} = \frac{1+t^9}{(1-t^3)^2(1-t^6)} =
\frac{1-t^{18}}{(1-t^3)^2(1-t^6)(1-t^9)}.
\end{aligned}$$
So $A^G$ is cyclotomic Gorenstein.

Finally, by the above Hilbert series, we see that
$cyc(\Aut(A)/A)=1$.
\end{proof}

We make the following conjecture.

\begin{conjecture}
\label{xxcon5.6} Let $A=A(a,b,c)$ where $a,b,c$ are generic. Then
the following are equivalent for any finite subgroup $G\subset
\Aut(A)$.
\begin{enumerate}
\item
$A^G$ is a cci.
\item
$A^G$ is a gci.
\item
$A^G$ is a nci.
\item
$A^G$ is cyclotomic Gorenstein.
\item
$G$ is generated by quasi-bireflections.
\end{enumerate}
\end{conjecture}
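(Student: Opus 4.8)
The plan is to prove the five conditions equivalent by first collecting the implications that come for free, then reducing to the case $G\subset SL(A)$, invoking Theorem \ref{xxthm5.5} to identify cyclotomic Gorensteinness with being generated by quasi-bireflections, and finally upgrading these to the three complete intersection conditions by showing each relevant fixed ring is a hypersurface. Since the generic Sklyanin algebra $A$ is noetherian Auslander regular, Theorem \ref{xxthm1.11} gives $(1)\Rightarrow(2)\Rightarrow$ wci and $(3)\Rightarrow$ wci, while Theorem \ref{xxthm3.4} gives wci $\Rightarrow$ cyclotomic Gorenstein for $A^G$; together with Theorem \ref{xxthm1.11}(a) this yields $(1)\Rightarrow(2)\Rightarrow(4)$ and $(3)\Rightarrow(4)$. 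I would then reduce to $SL(A)$: condition $(4)$ makes $A^G$ AS Gorenstein, so $\hdet g=1$ for all $g\in G$ by \cite[Corollary 4.11]{KKZ2}, whence $G\subset SL(A)$ by Lemma \ref{xxlem5.2}(c); and $(5)$ forces $G\subset SL(A)$ directly by Lemma \ref{xxlem5.3}(a). Thus when $G\not\subset SL(A)$ all five conditions fail and the equivalence is vacuous, so I may assume $G\subset SL(A)$.

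With $G\subset SL(A)$ I would run through the classification in Lemma \ref{xxlem5.4}. The case $G=\{1\}$ is immediate, since $A^G=A$ is AS regular. The case $G=\langle g_1(\alpha,1)\rangle$ with $\alpha\neq1$ gives $A^G=A^{(3)}$, which is not cyclotomic by Corollary \ref{xxcor2.4}(c); here all five conditions fail, since $(5)$ fails by Lemma \ref{xxlem5.3} and $(1)$--$(4)$ fail by the reduction above. In every remaining case $G$ is one of the order-$3$, order-$9$, or full groups that Theorem \ref{xxthm5.5} shows to be generated by quasi-bireflections, so $(4)\Leftrightarrow(5)$ and both hold. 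It then remains only to establish $(1)$, $(2)$, and $(3)$ for these groups.

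The crux is to show each such $A^G$ is a hypersurface, i.e. $cci(A^G)\le1$; granting this, Theorem \ref{xxthm1.10}(c) instantly supplies $gci(A^G)=nci(A^G)=1$, giving $(1)$, $(2)$, $(3)$ and closing the cycle. The Hilbert series computed inside Theorem \ref{xxthm5.5} are the guide: in each case they factor as $(1-t^{N})/\prod_i(1-t^{d_i})$, namely $(1-t^6)/[(1-t)(1-t^2)(1-t^3)^2]$ in the order-$3$ cases, $(1-t^9)/(1-t^3)^4$ in the order-$9$ cases, and $(1-t^{18})/[(1-t^3)^2(1-t^6)(1-t^9)]$ for $SL(A)$. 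Each is exactly the Hilbert series of a quotient $C/(\Omega)$ of a noetherian AS regular algebra $C$ with $H_C(t)=1/\prod_i(1-t^{d_i})$ by a normal regular element $\Omega$ of degree $N$. For each group the plan is therefore to exhibit explicit homogeneous invariants in the prescribed degrees $d_i$, to identify the AS regular algebra $C$ they generate (exploiting that the central degree-$3$ element of $A$ cutting out the elliptic curve is $G$-stable and survives in $A^G$), and to produce a single degree-$N$ normal regular element realizing $A^G\cong C/(\Omega)$.

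I expect this last step to be the main obstacle, and it is presumably why the statement is only conjectural: the invariants of the Sklyanin algebra are delicate to compute, and the general implication gci $\Rightarrow$ cci is open (see the discussion after \eqref{E1.10.8}), so there is no soft substitute for the explicit hypersurface construction. Verifying that the candidate $\Omega$ is genuinely normal and regular and that the quotient presentation is correct must be done case by case, with the order-$27$ group $SL(A)$ the hardest. Once the hypersurface property is secured in all cases, combining $(1)\Rightarrow(2)\Rightarrow(4)\Leftrightarrow(5)$, $(3)\Rightarrow(4)$, and $(5)\Rightarrow(1),(2),(3)$ gives the full equivalence.
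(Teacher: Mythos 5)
The statement you were asked to prove is Conjecture \ref{xxcon5.6}; the paper offers no proof of it, and your own closing remarks correctly diagnose why. Your assembly of the known implications is accurate and matches what the paper actually establishes: Theorem \ref{xxthm1.11} gives cci $\Rightarrow$ gci $\Rightarrow$ wci and nci $\Rightarrow$ wci; Theorem \ref{xxthm3.4} (applicable because the generic three-dimensional Sklyanin algebra is noetherian Auslander regular) upgrades wci to cyclotomic Gorenstein for fixed rings; Lemma \ref{xxlem5.2}(c) and Lemma \ref{xxlem5.3}(a) reduce everything to subgroups of $SL(A)$; and Theorem \ref{xxthm5.5} together with Lemma \ref{xxlem5.4} and Corollary \ref{xxcor2.4}(c) settles the equivalence of cyclotomic Gorensteinness with generation by quasi-bireflections, with all five conditions failing for $G=\langle g_1(\alpha,1)\rangle$ and for $G\not\subset SL(A)$.

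The genuine gap is the one you name yourself: passing from ``cyclotomic Gorenstein / generated by quasi-bireflections'' back to cci (and thence, via Theorem \ref{xxthm1.10}(c), to gci and nci). Nothing in the paper supplies the explicit presentations $A^G\cong C/(\Omega)$ for the order-$3$, order-$9$ and order-$27$ groups, and the Hilbert series computed in Theorem \ref{xxthm5.5} are only consistent with, not proof of, a hypersurface structure --- Example \ref{xxex6.4} exhibits a cyclotomic Gorenstein algebra whose Hilbert series has exactly the shape $\prod_i(1-t^{a_i})/\prod_j(1-t^{b_j})$ and which is nevertheless not a complete intersection. Since the soft implications gci $\Rightarrow$ nci and gci $\Rightarrow$ cci are open in general (see the discussion after \eqref{E1.10.8} and Question \ref{xxque6.5}), there is no way around the explicit construction. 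Your proposal is therefore not a proof but a correct reduction of the conjecture to its genuinely open core, which is precisely where the paper leaves it.
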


\section{Examples and questions}
\label{xxsec6}

In this section we collect some examples and questions which
indicate the differences between the commutative and the noncommutative
situations. First we show that $cci$, $gci$ and $nci$ are different concepts.

\begin{lemma}
\label{xxlem6.1} Suppose $A$ is a finite dimensional algebra. If $A\cong
C/(\Omega_1,\cdots,\Omega_n)$ where $C$ has finite global dimension
and  $\{\Omega_1, \dots, \Omega_n\}$ is a regular sequence of normalizing
elements in $C$, then $C$ is noetherian, AS regular, Auslander
regular and Cohen-Macaulay, and $A$ is Frobenius.
\end{lemma}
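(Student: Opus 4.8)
The plan is to promote the two hypotheses on $C$ (finite global dimension, and the existence of a regular normal sequence with finite dimensional quotient) to full AS regularity, and then to obtain the Frobenius property of $A$ by transporting the Gorenstein condition down the sequence $\{\Omega_1,\dots,\Omega_n\}$. The point is that finiteness of $A$ already forces $C$ to be noetherian and of finite GK-dimension, so the only real content is the Gorenstein side, which I would extract from the characterization recalled in Lemma \ref{xxlem1.6} together with the theory of algebras with enough normal elements in \cite{Zh1}.

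First I would lift finiteness from $A$ to $C$. Since $A$ is finite dimensional it is noetherian and finitely generated, and since each $\Omega_i$ is a normal regular element of positive degree these properties lift one step at a time up the tower $C\to C/(\Omega_1)\to\cdots\to A$: finite generation lifts because generators of a quotient together with $\Omega_i$ generate (descending induction on degree), and the noetherian property lifts by the standard $\Omega_i$-adic filtration argument, whose associated graded is a skew polynomial ring over the noetherian quotient and whose filtration is separated because $\deg\Omega_i>0$. Thus $C$ is noetherian and finitely generated. The short exact sequences $0\to (C/(\Omega_1,\dots,\Omega_{i-1}))(-\deg\Omega_i)\xrightarrow{\ \Omega_i\ } C/(\Omega_1,\dots,\Omega_{i-1})\to C/(\Omega_1,\dots,\Omega_i)\to 0$ then give $H_A(t)=\prod_{i=1}^n(1-t^{\deg\Omega_i})\,H_C(t)$; as $H_A(t)$ is a polynomial with $H_A(1)=\dim_k A>0$, the series $H_C(t)$ has a pole of order exactly $n$ at $t=1$, so $\GKdim C=n<\infty$.

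The crux is that $C$ has enough normal elements. Indeed, if $\mathfrak p$ is a graded prime of $C$ with $C/\mathfrak p$ nonsimple, then the $\Omega_i$ cannot all lie in $\mathfrak p$, for otherwise $C/\mathfrak p$ would be a factor of the finite dimensional $A$; taking the least $j$ with $\Omega_j\notin\mathfrak p$, the image of $\Omega_j$ is a nonzero homogeneous normal element of positive degree in the prime ring $C/\mathfrak p$, hence regular. By \cite{Zh1} an algebra with enough normal elements satisfies the $\chi$-condition, so all the hypotheses of Lemma \ref{xxlem1.6}(e) hold: $C$ is noetherian, satisfies $\chi$, has finite global dimension, and (being noetherian and finitely generated) resolves $k$ by finitely generated frees. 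Hence $C$ is AS regular. Invoking \cite{Zh1} again, the enough-normal-elements hypothesis also makes $C$ Auslander Gorenstein and Cohen-Macaulay, so together with finite global dimension $C$ is Auslander regular. The Cohen-Macaulay property finally pins the dimension: if $d$ denotes the AS Gorenstein injective dimension of $C$ then $j(k)=d$, and $\GKdim k+j(k)=\GKdim C$ gives $d=n$.

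It then remains to deduce that $A$ is Frobenius. Since $C$ is AS Gorenstein of injective dimension $n$ and each $\Omega_i$ is regular normal of positive degree, quotienting by one $\Omega_i$ at a time lowers the AS Gorenstein injective dimension by one, so $A$ is AS Gorenstein of injective dimension $0$; for a connected graded finite dimensional algebra this says precisely that $A$ is self-injective with $\operatorname{soc}(A)=\Hom_A(k,A)\cong k(\ell)$ one dimensional, i.e. $A$ is Frobenius. The hard part, and the step I would treat most carefully, is this descent: because the $\Omega_i$ are normal but not central, each quotient twists the dualizing data by a normalizing automorphism, and one must track these twists through the iterated Gorenstein transfer to be sure the surviving cohomology is one dimensional and concentrated in a single cohomological degree. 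The other point needing care is the verification that $C$ genuinely falls under the enough-normal-elements theory, so that the $\chi$-condition and hence the AS-regularity characterization of Lemma \ref{xxlem1.6} truly apply.
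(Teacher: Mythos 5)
Your proposal is correct and follows essentially the same route as the paper's proof: the paper lifts the noetherian property from $A$ to $C$ via \cite[Proposition 3.5(a)]{Le}, observes that $C$ has enough normal elements and invokes \cite[Theorem 0.2]{Zh1} for AS regularity, Auslander regularity and Cohen--Macaulayness, and then applies Rees's lemma \cite[Proposition 3.4(b)]{Le} to descend the Gorenstein property to injective dimension $0$. Your argument simply unpacks these citations (the filtration argument for noetherianity, the verification of enough normal elements via the least $\Omega_j\notin\mathfrak p$, and the iterated one-step Gorenstein transfer), so it is the same proof in expanded form.
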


\begin{proof} Since $A$ is noetherian, $C$ is noetherian by
\cite[Proposition 3.5 (a)]{Le}.
Furthermore $C$ has enough normal elements in the sense of \cite[p.
392]{Z1}. By \cite[Theorem 0.2]{Z1}, $C$ is AS regular,
Auslander regular and Cohen-Macaulay. By Rees's lemma
\cite[Proposition 3.4(b)]{Le}, $A$ is Gorenstein of injective
dimension 0. Hence $A$ is Frobenius.
\end{proof}

\begin{example}
\label{xxex6.2} Let $A=k\langle x,y\rangle/(x^2,xy,y^2)$. This is a
finite dimensional Koszul algebra with Hilbert series $1+2t +t^2$,
and $A$ is finitely generated and noetherian. The $\Ext$-algebra $E=E(A)$
is isomorphic to $k\langle x,y\rangle/(yx)$ with Hilbert series
$(1-t)^{-2}$. By definition, $gci(A)=\GKdim E=2$ and $A$ is a gci.
It is well-known that $E$ is not (left or right) noetherian.
 For example, if
$I_i:= Exy \oplus Exy^2 \oplus \cdots \oplus Exy^i$  and 
$J_i := xyE \oplus x^2yE \oplus \cdots\oplus x^iyE$, then 
$I_i$ (respectively, $J_i$) gives an infinite ascending chain 
of left (respectively, right) ideals
of $E$, so $E$ is not (left or right) noetherian and $\Kdim E=\infty$.
By definition, $A$ is not a nci and $nci(A)=\infty$. Since $A$ is
finite dimensional and not Frobenius ($A$ is local with 2 minimal 
right ideas $xA$ and $yxA$), $A$ is not a 
cci [Lemma \ref{xxlem6.1}] and $cci(A)=\infty$. Since
$H_A(t)=\frac{(1-t^2)^2}{(1-t)^2}$, $cyc(A)=2$.
\end{example}

\begin{example}
\label{xxex6.3} Let $R$ be the connected graded Koszul noetherian
algebra of global dimension four that is not AS regular given in
\cite[Theorem 1.1]{RS}. Its Hilbert series is $H_R(t)=(1-t)^{-4}$.
Let $A$ be the Koszul dual of $R$. Since $R$ is not AS regular, $A$
is not Frobenius \cite[Theorem 4.3 and Proposition 5.10]{Sm}. Hence $A$
is not a cci by Lemma \ref{xxlem6.1}. However, the $\Ext$-algebra of
$A$ is $R$, which is noetherian and has GK-dimension 4.
Consequently, $A$ is both a gci and nci. It is easy to see that
$gci(A)=cyc(A)=4$.
\end{example}

An example of Stanley shows that, even in the commutative case,
$cci$ and ``cyclotomic Gorenstein'' are different concepts.

\begin{example}\cite[Example 3.9]{St2}
\label{xxex6.4} Let $A$ be the connected commutative algebra
$$k[x_1,x_2,x_3,x_4,x_5,x_6,x_7]/I$$  with $\deg(x_i)=1$ where
$$I = (x_1x_5-x_2x_4, x_1x_6-x_3x_4, x_2x_6-x_3x_5, x_1^2x_4-x_5x_6x_7,
x_1^3-x_3x_5x_7).$$ This is a normal Gorenstein domain, but not a
complete intersection (so $gci(A)=\infty$). Its Hilbert series is
$$H_A(t) =\frac{(1+t)^3}{(1-t)^4}=\frac{(1-t^2)^3}{(1-t)^7}.$$
Hence it is cyclotomic Gorenstein and $cyc(A)=3$.
\end{example}

Many examples and Theorem \ref{xxthm1.12} indicate that being a cci
could be the strongest among all different versions of a
noncommutative complete intersection. In this direction, we need to
answer the following question.

\begin{question}
\label{xxque6.5} Suppose ${\text{char}}\; k=0$. If $A$ is a cci,
then is $A$ a nci?
\end{question}

By Theorem \ref{xxthm1.11}(c), this conjecture is true when
$cci(A)\leq 1$, and is unsolved for $cci(A)\geq 2$. If Question
\ref{xxque6.5} has a positive answer, then we have the following
diagram for invariant subrings under a finite group acting on an AS-regular algebra. 
\begin{equation}\label{E6.5.1}\tag{E6.5.1} \quad \end{equation}
$$\begin{CD}
\qquad\qquad cci \qquad\qquad @>\not\leftarrow {\text{Example
\ref{xxex6.3}}} >{\text{Conjecture \ref{xxque6.5}}}> nci\\
@V {\text{Theorem \ref{xxthm1.12}(a)}} V\not\uparrow {\text{Example
\ref{xxex6.3}}}V
@VV{\text{Theorem \ref{xxthm1.12}(b)}} V \\
\qquad\qquad gci \qquad\qquad @>{\text{Theorem \ref{xxthm1.12}(b)}}>> wci\\
@. @VV{\text{Theorem \ref{xxthm3.4}}}V \\
 @. cyclotomic \;\; Gorenstein\\
\end{CD}$$
where the implication from $wci\longrightarrow  cyclotomic \;\; Gorenstein$ is valid
only for fixed subrings of Auslander regular algebras [Theorem \ref{xxthm3.4}].

The next two
 examples concern the noncommutative version of a
bireflection. We would like to show that quasi-bireflection is a
good generalization in the noncommutative setting in order to answer
Question \ref{xxque0.3}.

\begin{example}
\label{xxex6.6} Let $B=k_{-1}[x,y,z]$ where all the variables
$(-1)$-skew commute. Let $V=B_1=kx+ky+kz$. Consider
$$g =\begin{pmatrix}0 & -1 & 0\\1& 0 &0\\0 & 0 & -1\end{pmatrix},$$
which has eigenvalues $-1, i ,-i$. Hence $g\mid_V$ is not a
classical bireflection of the vector space $V$.  By an easy computation,
$$Tr_A(g,t) = 1/((1+t)(1-t^2)) = Tr_A(g^2,t)= Tr_A(g^3,t)$$
and  $\hdet g=1$ 
by Lemma \ref{xxlem3.6}, and hence $g$ is a quasi-bireflection.  
A computation using Molien's Theorem (Theorem \ref{xxthm3.3}) shows
the Hilbert series of the fixed ring is
$$H_{A^\langle g \rangle}(t) = \frac{1-t^6}{(1-t^2)^3(1`-t^3)},$$
and
that $A^{g}$ is isomorphic (under an isomorphism that associates 
$x^2+y^2 \mapsto X$, $xy \mapsto Y$, $z^2 \mapsto Z$ and 
$x^2z-y^2z \mapsto W$) to
$$k[X,Y,Z,W]/(W^2-(X^2+4Y^2)Z),$$
 which is a
commutative complete intersection. To match our terminology for
quasi-reflections in \cite{KKZ1}, we might call $g$ a mystic
quasi-bireflection because $g$ is a quasi-bireflection but not a
classical bireflection of $V$.
\end{example}

\begin{example}
\label{xxex6.7} Let $B= k_{-1}[x,y,z,w]$ where all the variables
$(-1)$-skew commute. Let $V=B_1=kx+ky+kz+kw$. Let
$$g =\begin{pmatrix} 0 & 1 & 0&0\\1 & 0 & 0 &0\\
0 & 0 & 0 & 1\\ 0 &0 & 1& 0 \end{pmatrix}.$$ Then $g$ is a classical
bireflection of the vector space $V$ but $g$ is not a
quasi-bireflection, since its trace is $Tr(g,t) = 1/(1+t^2)^2$. The
fixed subring is Gorenstein because $\hdet g =1$ by Lemma \ref{xxlem3.6}. 
Using Molien's Thereom (Theorem \ref{xxthm3.3}) the Hilbert series
of the fixed ring is
$$H_{B^g}(t)=\frac{1-2t+4t^2-2t^3+t^4}{(1-t)^4(1+t^2)^2}$$
so that $B^g$ is Gorenstein, but not cyclotomic Gorenstein.
Consequently, $B^G$ is not a noncommutative complete intersection of
any type.
\end{example}

In \cite{WR} all groups $G$ so that $\mathbb{C}[x,y,z]^G$ is a
complete intersection are completely determined. In the
noncommutative case, we can ask:

\begin{question}
\label{xxque6.8} Let $A$ be a noetherian AS regular algebra of
global dimension three that is generated in degree 1. Determine all
finite subgroups $G\subset \Aut(A)$ such that $A^G$ is a
noncommutative complete intersection (of a certain type).
\end{question}

This question was answered for global dimension 2 in \cite[Theorem
0.1]{KKZ4}. The problem of determining which groups $G$ of graded
automorphisms of $A=k[x_1, \cdots, x_n]$ have the property that
$A^G$ is a hypersurface was solved in \cite{N1,N2} (see also
\cite[Theorem 7]{NW}). So we can ask

\begin{question}
\label{xxque6.9} Let $A$ be a noetherian AS regular algebra of
global dimension at most three that is generated in degree 1.
Determine all finite subgroups $G\subset \Aut(A)$ such that $A^G$ is
a hypersurface (of a certain type).
\end{question}

Another closely related question is

\begin{question}
\label{xxque6.10} If $nci(A)=1$, then is $A$ cyclotomic Gorenstein?
\end{question}

The complete intersections $A^G$ for $A=\mathbb{C}[x,y,z]$ and $G$  an
abelian subgroup of $GL(n, \mathbb{C})$ are computed in \cite{W3}.
We can ask, in the noncommutative case, what can be said when $G$ is
abelian?

In \cite{WR} complete intersections $\mathbb{C}[x,y,z]^G$
are considered; here $\mathbb{C}[x,y,z]^G$ is a complete
intersection if and only if the minimal number of algebra generators
of $\mathbb{C}[x,y,z]^G$ is $\leq 5$ \cite[Corollary p. 107]{NW}. A
complete intersection $A^G$ for $A=k[x_1, \cdots, x_n]$ always has
$\leq 2n -1$ generators (but for n=4 there is an example of $A^G$
not a complete intersection but the number of generators is $7$).
Are there noncommutative version(s) of these results?  What
nice properties distinguish noncommutative complete intersections
from other AS Gorenstein rings?

\begin{remark}
\label{xxrem6.11} To do noncommutative algebraic geometry and/or
representation theory (e.g. suppport varieties and so on) related to
noncommutative complete intersection rings sometimes it is
convenient to assume that the $\Ext$-algebra of $A$ has nice
properties. For example, one might want to assume that $A$ is a nci
and $E(A)$ has an Auslander Cohen-Macaulay rigid dualizing complex.
Together with these extra hypotheses, this should be a good
definition of noncommutative complete intersection. So we conclude
with the following question.
\end{remark}

\begin{question}
\label{xxque6.12} Let $A$ be a connected graded noetherian algebra
that is a nci. Suppose that the $\Ext$-algebra $E(A)$ has an
Auslander Cohen-Macaulay rigid dualizing complex. Then is $A$
Gorenstein?
\end{question}

\section*{Acknowledgments}
The authors thank Quanshui Wu for several useful discussions and
valuable comments and thank 
Brian Smith who assisted with some of the initial computations of
examples related to this work. Ellen
Kirkman was partially supported by the Simons Foundation (\#208314)
and James Zhang by the National Science Foundation (DMS 0855743 and
DMS-1402863).

\end{document}